\renewcommand{\Re}{\operatorname{Re}}
\renewcommand{\Im}{\operatorname{Im}}
\newcommand{\dom}{\mathrm{odom}}
\DeclareMathOperator{\supp}{supp}
\newcommand{\der}{\mathrm{d}}
\newcommand{\rmi}{\mathrm{i}}
\newcommand{\ee}{\mathrm{e} }
\newcommand{\sphere}{{\mathbb{S}^{d-1}}}
\newcommand{\tr}{\mathrm{tr}}
\newcommand{\Tr}{\mathrm{Tr}}
\newcommand{\curl}{\mathrm{curl}}
\renewcommand{\div}{\mathrm{div}}
\renewcommand{\dom}{\mathrm{dom}}
\newcommand{\e}{\mathsf{e}}
\newcommand{\Reell}{\mathbb{R}}
\newcommand{\R}{\mathbb{R}}
\newcommand{\C}{\mathbb{C}}
\newcommand{\comp}{0}
\newcommand{\compp}{\mathrm{comp}}
\newcommand{\CnI}{C^\infty_{0}}
\newcommand{\loc}{\mathrm{loc}}
\newcommand{\rel}{\mathrm{rel}}
\newtheorem{theorem}{Theorem}[section]
\newtheorem{definition}[theorem]{Definition}
\newtheorem{lemma}[theorem]{Lemma}
\newtheorem{assumption}[theorem]{Assumption}
\newtheorem{proposition}[theorem]{Proposition}
\newtheorem{rem}[theorem]{Remark}
\title[Birman-Krein formula for Maxwell's equations]{The Birman-Krein formula for differential forms and electromagnetic scattering}
\author[A. Strohmaier]{Alexander Strohmaier}
\address{School of Mathematics,  University of Leeds,  Leeds , Yorkshire, LS2 9JT,
UK} \email{a.strohmaier@leeds.ac.uk}
\thanks{Supported by Leverhulme grant RPG-2017-329}
\author[A. Waters]{Alden Waters}
\address{ University of Groningen, Bernoulli Institute,
Nijenborgh 9,
9747 AG Groningen,
The Netherlands} \email{a.m.s.waters@rug.nl}
\begin{document}

\begin{abstract}
 We consider scattering theory of the Laplace Beltrami operator on differential forms on a Riemannian manifold that is Euclidean near infinity. Allowing for compact boundaries of low regularity we prove a Birman-Krein formula on the space of co-closed differential forms. In the case of dimension three this reduces to a Birman-Krein formula in Maxwell scattering.
\end{abstract}

\maketitle


\section{Introduction and main theorems}

Let $(X,g)$ be an oriented complete connected Riemannian manifold of dimension $d \geq 2$ which is Euclidean near infinity.
This means that there exists a compact subset $K \subset X$ and $R>0$  such that $X \setminus K$ is isometric to $\Reell^d \setminus \overline{B_{R(0)}}$.
Let $\Omega$ be an open subset in $K$ with compact closure  and define $M = X \setminus \Omega$. We will assume throughout that $M$ is connected. The interior of $M$ is then  $X \setminus \overline{\Omega}$ and will be denoted by $M^\circ$.
The subset $\Omega$ will be thought of as an (or many) obstacle(s) in $X$. 

In this paper we will be discussing the scattering of differential forms in $X$ relative to Euclidean space. Scattering takes place because of the possibly non-trivial geometry or topology in $K$ and the possible presence of the obstacles.
One of the important theorems in scattering theory is the Birman-Krein formula relating the spectral shift to the scattering matrix. We refer to the standard textbook \cite{MR1180965} and also \cite{DZ} for background on scattering theory.
Scattering theory of differential forms has been discussed in detail in our paper \cite{OS}, and a Birman-Krein formula has been proved for the Laplace-Beltrami operator on differential forms in this setting with the additional assumption that all the obstacles $\Omega$ have smooth boundary. We extend the class of $\Omega$ for which these results are valid to a much more general class. This includes examples of Lipschitz domains in $\R^d$. The main purpose of this paper however is to give a different trace formula that formally corresponds to taking the trace on the sub-space of co-closed forms. One can view this as related to the Birman-Krein formula, but it does not directly reduce to the formula in \cite{OS}. This is an important general setting for differential forms that will include Maxwell's equations as a special case. Maxwell's equations are vector valued and developing a trace formula for the corresponding time-harmonic evolution operator in the presence of obstacles is more difficult than for the standard Helmholtz equation. 

Let as usual $\der: \CnI(X;\Lambda^\bullet T^{*} X) \to \CnI(X;\Lambda^\bullet T^{*} X)$ be the differential on smooth forms. To cover very general situations will also choose a Hermitian bundle metric on the vector-bundle of differential forms $\Lambda^\bullet T^*X$, but require that
\begin{enumerate}
 \item on $X \setminus K$ the Hermitian metric coincides with the usual Euclidean-induced bundle,
  \item $\Lambda^p T^*_x X$ is orthogonal to $\Lambda^q T^*_x X$ if $p \not= q$,
 \item if $\e(\xi) : \Lambda^\bullet T^*_x X \to \Lambda^\bullet T^*_x X$ is the operator of exterior multiplication by $\xi \in T^*_x X$ then $\ker{\e(\xi)} \cap \ker{\e(\xi)^*} = \{0\}$ whenever $\xi \not =0$. Here $\e(\xi)^*$ is the adjoint of $\e(\xi)$ with respect to the bundle metric.
\end{enumerate}

These conditions are obviously satisfied for the bundle metric induced by the metric $g$. The space $L^2(X,\Lambda^\bullet T^{*} X)$ will be equipped with the inner product constructed from the bundle metric and the metric volume form. Formal adjoints will be taken with respect to this inner product throughout.
Let $\delta: \CnI(X;\Lambda^\bullet T^* X) \to \CnI(X; \Lambda^\bullet T^{*} X)$ be
the formal adjoint of $\der$ with respect to this bundle metric.
The generalised Laplace-Beltrami operator $\Delta$ on differential forms is defined as $\Delta= \der \delta + \delta \der$. 
Note that outside $K$ this operator agrees with the usual Laplace operator. Since we allow general bundle metrics to form $\delta$ the operator $\Delta$ does in general not have scalar principal symbol. 
The conditions guarantee that the operator $\Delta$ has principal symbol that is symmetric and positive definite. In particular, condition (3) implies that $\Delta$ is elliptic and the weak unique continuation property holds, as will be explained below. 

The case of forms of degree one is of particular interest in scattering theory of the electromagnetic field in dimension $d=3$.
For an electric field given by $D, E \in C^\infty(\R_t \times M^\circ,T^*M)$ and a magnetic field given by $\underline{H}, \underline{B} \in C^\infty(\R_t \times M^\circ,T^*M)$. Maxwell's equations in linear matter are
\begin{gather*}
 \curl E = - \dot B, \quad \div D =0,\\
 \div \underline{B} = 0, \quad   \curl\, \underline{H} = \dot D,\\
 D = \mathbf{\epsilon}  E, \quad \underline{B} = \mathbf{\mu} \underline{H},
\end{gather*}
where we use the dot for the $t$-derivative, i.e. $\dot E = \partial_t E$. Here $\epsilon$ and $\mu$ incorporate the effect of matter and are positive-definite matrix-valued functions which we assume to be smooth. Metallic boundary conditions in the case of smooth obstacles correspond to the tangential component of $E$ and the normal component of $\underline{B}$ vanishing at the boundary. To connect this to the language of differential forms one considers instead of the co-vector field $\underline{B}$ the two-form $B$ defined by $\underline{B}= * B$, where $*$ is the Hodge-star operator on $M$.
We then think of $\epsilon$ as a smooth $\mathrm{End}(\Lambda^1 T^*M)$-valued function on $M$ which is pointwise positive definite. Similarly, $\mu$ will be thought of as a smooth $\mathrm{End}(\Lambda^2 T^*M)$-valued function on $M$. One can now define a metric on $\Lambda^p T_x^*M$ by
$$
 (v, w) = \begin{cases} v w, \; & p=0,\\ (v, \epsilon \,w)_g, & p=1,\\ (v, \mu^{-1} w)_g, & p=2,\\ (v, w)_g,  & p=3. \end{cases}
$$
where $v,w \in \Lambda^p T_x^*M$ and $(\cdot,\cdot)_g$ is the metric induced inner product on $\Lambda^p T_x^*M$. This defines a bundle metric on the direct sum $\Lambda^\bullet T_x^*M$ and one computes
\begin{gather*}
 \div (\epsilon E) = -\delta E,\quad * \der E = \mathrm{curl} E,\quad \delta (\mu H) = \epsilon^{-1}\mathrm{curl}\underline{H}.
\end{gather*}
Here the operator $\mathrm{curl}$ acts on one forms and is defined as $* \der$. This agrees with the usual $\mathrm{curl}$ operator on vector fields if they are identified using the musical isomorphism induced by the metric $g$.
We therefore obtain for Maxwell's equations the differential form version
\begin{gather*}
 \der E = - \dot B, \quad \der B =0,\\
 \delta E = 0, \quad  \delta  H = \dot E.
\end{gather*}
This completely absorbs the effect of matter into the bundle metric.
As before $E$ and $B$ are time-dependent differential forms.
Metallic boundary conditions now mean that $E$ and $B$ have vanishing tangential components, so that $\underline{B}$ has vanishing normal component.
In particular for $E$ one obtains the equation
$$
 \epsilon \ddot E + \curl (\mu^{-1} \curl E) = 0, \quad \div (\epsilon E) = 0,
$$
or equivalently
$$
   \ddot E + \Delta E =0, \quad \delta E = 0.
$$
One is therefore lead to the spectral theory of the operator $\Delta$ on divergence-free covector fields satisfying appropriate boundary conditions, where the effect of matter is hidden in the operator $\delta$. In case $\epsilon=\mu=1$ one has as usual $\Delta = \curl \,\curl  -  \mathrm{grad} \;\div$.

For general dimension and arbitrary form-degree the generalisation of this is the spectral theory of the generalised Hodge Laplacian $\Delta$ on co-closed $p$-forms with relative boundary conditions. The construction of a self-adjoint extension of this generalised Hodge-Laplacian is easier to state if one considers the operator as acting on the bundle of all forms, keeping in mind that $\Delta$ preserves the form degree.
First define the (unbounded) operator $\der_c: C^\infty_0(M^\circ, \Lambda^\bullet T^{*}M) \to L^2(M^\circ, \Lambda^\bullet T^{*}M)$.
Its adjoint $\der^*_c$ has domain 
$$
 \mathrm{dom}(\der^*_c) = \{ f \in L^2(M^\circ, \Lambda^\bullet T^{*}M) \, \mid \, \delta f \in L^2(M^\circ, \Lambda^\bullet T^{*}M) \},
 $$ where the derivatives and co-derivatives are in the sense of distributions on $M^\circ$.
In particular, $\der^*_c$ is densely defined and therefore the adjoint $(\der^*_c)^*$ coincides with the closure of $\der_c$.

We will now denote by $\overline{\der}$ the closure of  $\der_c$. 
As a consequence of $\der^2=0$ it follows from abstract theory that $D = \overline{\der} + \overline{\der}^*$, with implied domain $\mathrm{dom}(D)=\mathrm{dom}(\overline{\der}) \cap \mathrm{dom}(\overline{\der}^*)$, is automatically a self-adjoint operator, and so is its square
$$
 \Delta_{\rel} = D^2 = \overline{\der}\, \overline{\der}^* + \overline{\der}^*\, \overline{\der}.
$$
This operator is called the Laplace operator with {\sl relative boundary conditions}. We refer to Appendix \ref{relap} for details. 

Another relevant operator is the absolute Laplacian $\Delta_{\mathrm{abs}}$ which can be defined by
$\Delta_{\mathrm{abs}} = \tilde{*}^{-1} \Delta_{\mathrm{rel}} \tilde{*}$, where $\tilde{*}$ is the generalised Hodge star satisfying
$\langle v, w \rangle \mathrm{dVol}_g = v \wedge \tilde{*} w.$
The operator $\Delta_{\mathrm{abs}}$  can also be constructed in the above manner by interchanging the roles of $\der$ and $\delta$. Since these operators are related immediately by the generalised Hodge star operator we will discuss only relative boundary conditions in this paper and remark that results for the absolute Laplacian are obtained easily by conjugating with the generalised Hodge star operator. \\
Note that the above definition of the Hodge Laplace operator makes sense for any smooth manifold. In particular it also makes sense for any open subset $\mathcal{O} \subset X$. We denote the corresponding operator by $\Delta_{\mathcal{O},\mathrm{rel}}$. Define $\Omega_R$ as $K^\circ \setminus \overline \Omega$. Throughout we will make the following assumption.
\begin{assumption} \label{assumeit}
 There exists a $k>0$ such that $(\Delta_{\Omega_R,\rel}+1)^{-1}$ and $(\Delta_{\Omega,\rel}+1)^{-1}$ are in the $k$-th Schatten ideal in $L^2(\Omega_R,\Lambda^\bullet T^{*}X)$ and $L^2(\Omega,\Lambda^\bullet T^{*}X)$, respectively.
\end{assumption}

This is not a severe restriction. It is implied, for example, by any Weyl-type asymptotic for compact domains with the assumed regularity class of $\Omega$. Weyl asymptotics have been established for Lipschitz domains in $\R^d$ \cite{MR3113431}. The condition above is however not empty. Indeed, in form degree $d$ the operator $\Delta_\rel$ is equivalent to the Neumann Laplacian. Examples in dimension two show that this operator can have essential spectrum \cite{simon}.  

We describe now the spectral theory of $\Delta_\rel$ under the above assumption. The spectrum of $\Delta_\rel$ consists of the point $0$, which is an eigenvalue of finite multiplicity, and the absolutely continuous part $[0,\infty)$. This is a consequence of the meromorphic continuation of the resolvent, general stationary scattering theory, and unique continuation. We summarise the relevant construction in Section \ref{scatt} and now describe the spectral resolution.

Choose an orthonormal basis $(u_j)_{j=1,\ldots,N}$ in $\ker(\Delta_{\mathrm{rel}})$ consisting of eigensections with eigenvalue zero.
Then $(u_j)$ gives the discrete part of the spectrum.  The continuous part of the spectrum is described by the generalised eigenfunction
$E_\lambda(\Phi) \in C^\infty(M^\circ; \Lambda^\bullet T^{*}M)$ that are indexed by $\Phi \in C^\infty(\sphere;\Lambda^\bullet \C^d)$ and $\lambda>0$. We refer to Section 4 for the construction of $E_\lambda$. However, in order to define the main notions we record here its defining properties.

\begin{proposition}
For fixed $\lambda>0$ the generalised eigenfunctions $E_{\lambda}(\Phi)$ are completely determined by the following
\begin{enumerate}
\item $(\Delta- \lambda^2 ) E_\lambda(\Phi) =0$,
\item $\chi E_\lambda \in \mathrm{dom}(\Delta_\rel)$ for any $\chi \in C^\infty_0(M)$ with $\der \chi=0$ near $\partial \Omega$,
\item The asymptotic expansion
$$
 E_\lambda(\Phi) = \frac{\ee^{-\rmi \lambda r} \ee^{\frac{i\pi(d-1)}{4}} }{r^{\frac{d-1}{2}}} \Phi + \frac{\ee^{\rmi \lambda r} \ee^{-\frac{i\pi(d-1)}{4}} }{r^{\frac{d-1}{2}}} \Psi_\lambda + O\left(\frac{1}{r^{\frac{d+1}{2}}}\right),  \quad \textrm{for} \,\,\,r \to \infty.
$$
for some $\Psi_\lambda \in C^\infty(\sphere;\Lambda^\bullet \C^d)$.
\end{enumerate}
\end{proposition}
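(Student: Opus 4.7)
This is a uniqueness statement; existence of $E_\lambda(\Phi)$ is supplied later in Section~\ref{scatt} via the meromorphic continuation of the resolvent. Suppose both $E_\lambda(\Phi)$ and $\tilde E_\lambda(\Phi)$ satisfy (1)--(3) for the same incoming datum $\Phi$, and set $F := E_\lambda(\Phi) - \tilde E_\lambda(\Phi)$. By linearity $F$ still satisfies (1) and (2), and the cancellation of the incoming leading term produces the purely outgoing expansion
$$ F = \frac{\ee^{\rmi\lambda r}\ee^{-\rmi\pi(d-1)/4}}{r^{(d-1)/2}}\bigl(\Psi_\lambda - \tilde\Psi_\lambda\bigr) + O\!\left(r^{-(d+1)/2}\right). $$
The plan is to show that any such purely outgoing solution must be identically zero.

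The heart of the argument is a Rellich--Green flux identity. Choose $R$ large enough that $K \subset B_R$, and apply the standard Green formula to $F$ and $\overline F$ on $M \cap B_R$. Since $(\Delta - \lambda^2)F = 0$, the bulk integrals cancel and one is left with
$$ \int_{\partial\Omega}\!\bigl[(\partial_\nu F)\,\overline F - F\,\partial_\nu \overline F\bigr]\,\der S \;+\; \int_{S_R}\!\bigl[(\partial_r F)\,\overline F - F\,\partial_r \overline F\bigr]\,\der S \;=\; 0. $$
The $\partial\Omega$-term vanishes: by property (2) one has $\chi F \in \dom(\Delta_\rel)$ for cut-offs $\chi$ constant near $\partial\Omega$, and self-adjointness of $\Delta_\rel$ (which already encodes the relative boundary condition through the closures $\overline\der$ and $\overline\der^*$) forces the corresponding boundary pairing to be real. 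Inserting the asymptotic expansion into the $S_R$-term and using that the oscillating cross-terms $\ee^{\pm 2\rmi\lambda r}$ integrate to $o(1)$ on the sphere, one finds the $S_R$-contribution tends to $2\rmi\lambda\,\|\Psi_\lambda-\tilde\Psi_\lambda\|^2_{L^2(\sphere;\Lambda^\bullet\C^d)}$ as $R\to\infty$. Hence $\Psi_\lambda = \tilde\Psi_\lambda$ and $F = O(r^{-(d+1)/2})$.

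A classical Rellich--Vekua argument then takes over: on the Euclidean end, expanding $F$ component-wise in spherical harmonics shows that every non-trivial mode, being a linear combination of Bessel and Hankel functions, has $\|\cdot\|_{L^2(S_R)}$ bounded below by a positive constant as $R\to\infty$, whereas the decay $F = O(r^{-(d+1)/2})$ gives $\|F\|_{L^2(S_R)} = O(R^{-1})$. Thus $F \equiv 0$ on $X\setminus K$. Condition (3) on the bundle metric ensures that $\Delta$ is elliptic with scalar, positive-definite principal part, and hence enjoys the weak unique continuation property; together with the connectedness of $M$ this propagates $F \equiv 0$ to all of $M^\circ$. The principal technical obstacle is the $\partial\Omega$-flux computation when $\partial\Omega$ is only Lipschitz: classical trace formulae need not make literal sense, and the vanishing of that boundary term must be deduced abstractly from the self-adjoint pairing $\langle \Delta_\rel(\chi F), \chi F\rangle = \langle \chi F, \Delta_\rel(\chi F)\rangle$ combined with the commutator identity for $[\Delta,\chi]$.
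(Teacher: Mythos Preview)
Your overall strategy---take the difference $F$, use a Green/flux identity to kill the outgoing leading term, invoke Rellich on the Euclidean end, and finish with unique continuation---is exactly the route the paper relies on (it states the proposition without proof in the introduction and in Section~\ref{scatt} simply says that ``Rellich's uniqueness and the unique continuation property'' give absence of embedded eigenvalues, referring to \cite{OS} for details). So the approach is correct and matches the paper.

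Two points deserve tightening. First, you assert that condition~(3) on the bundle metric makes the principal part of $\Delta$ \emph{scalar}. This is false in the generality of the paper: it is stated explicitly in the introduction that ``the operator $\Delta$ does in general not have scalar principal symbol.'' What condition~(3) does buy is ellipticity and, after passing to normal coordinates and an orthonormal frame so that the principal symbol becomes diagonal at a point, the strong unique continuation result of \cite{MR2361423}. Your conclusion (weak unique continuation) is still available, but the justification you gave is not the right one.

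Second, writing an explicit surface integral over $\partial\Omega$ and then arguing it vanishes is awkward here, since for merely Lipschitz (or worse) boundary the normal trace $\partial_\nu F$ need not make classical sense. The cleaner route---which you gesture at in your last paragraph---is to never write that term at all: Proposition~\ref{ibpprop} shows directly that for forms locally in $\dom(\Delta_\rel)$ near $\partial\Omega$ the integration-by-parts identity over $M_\rho$ produces \emph{only} the outer boundary term on $\partial B_\rho$. Combining this with $(\Delta-\lambda^2)F=0$ gives immediately that the flux through $S_\rho$ vanishes for every $\rho>R$, and the rest of your argument goes through unchanged.
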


As a result $\Psi_\lambda$ is uniquely determined and implicitly defines a linear mapping 
\begin{gather*}
 S_\lambda:  C^\infty(\sphere;\Lambda^\bullet \C^d) \to C^\infty(\sphere;\Lambda^\bullet \C^d), \quad \Phi \mapsto \tau \Psi_\lambda,
\end{gather*}
where $\tau: C^\infty(\sphere;\Lambda^\bullet \C^d) \to C^\infty(\sphere;\Lambda^\bullet \C^d) $ is the pull-back of the antipodal map. The map $S_\lambda : C^\infty(\sphere,\Lambda^\bullet \C^d) \to C^\infty(\sphere,\Lambda^\bullet \C^d)$ is called the scattering matrix,
and $ A_\lambda= S_\lambda - \mathrm{id}$ is called the scattering amplitude. Reminiscent of the Hodge-Helmholtz decomposition the scattering matrix admits a decomposition
$$
 S_\lambda = \left( \begin{matrix} S_{n,\lambda} & 0 \\ 0 & S_{t,\lambda} \end{matrix} \right),
$$
if $C^\infty(\sphere,\Lambda^\bullet \C^d)$ is decomposed as $C^\infty_n(\sphere,\Lambda^\bullet \C^d) \oplus C^\infty_t(\sphere,\Lambda^\bullet \C^d)$ into normal and tangential parts. Here $C^\infty_n(\sphere,\Lambda^\bullet \C^d)$ is the kernel of the map
$\der r \wedge $, where $dr$ is the unit conormal on the sphere, and $C^\infty_t(\sphere,\Lambda^\bullet \C^d)$ is the image of $\iota_{\der r}$, inner multiplication by $\der r$.

The spectral shift function usually describes the trace of the difference of functions of perturbed and unperturbed operators in scattering theory.
In our setting these operators act on different Hilbert spaces so a suitable domain decomposition is needed.
Let $P$ be the orthogonal projection $L^2(M^\circ) \to L^2(M \setminus K)$, and let  $P_0$ be the orthogonal projection
$L^2(\R^d) \to L^2(\R^d \setminus B_R(0))$.
Note that on $L^2(\R^d, \Lambda^\bullet \C^d)$ we have the {\sl free Laplacian} $\Delta_0$ with domain $H^2(\R^d, \Lambda^\bullet \C^d)$. Our main result is the following.

\begin{theorem} \label{main1}
 Let $f \in C^\infty_0(\R)$ be an even compactly supported smooth function and let $0 \leq p \leq d$. Let $Q$ be either $\delta \der$ or $\der \delta$ regarded as a differential operator. Then the operators
 $$
  (1-P) Q f(\Delta_\rel^{1/2}) (1-P), \;(1-P_0) Q f(\Delta_0^{1/2}) (1-P_0), P  Q f(\Delta_\rel^{1/2})P - P_0 Q f(\Delta_0^{1/2}) P_0
 $$
 are trace-class and
 \begin{gather*}
  \Tr_p \left( (1-P) Q f(\Delta_\rel^{1/2}) (1-P) \right) - \Tr_p \left( (1-P_0) Q f(\Delta_0^{1/2}) (1-P_0) \right) \\+ \Tr_p \left( P Qf(\Delta^{1/2}_{\rel})P - P_0 Q f(\Delta_0^{1/2}) P_0 \right) = \frac{1}{2 \pi \rmi }\int_{0}^\infty\lambda^2  f(\lambda) \Tr_{L^2(\sphere,\Lambda^p \C^d)} \left ( S_Q^*(\lambda) S_Q^{\;\prime}(\lambda) \right) \der \lambda,
 \end{gather*}
 where we set $S_Q(\lambda):= S_{t,\lambda}$ if $Q=\delta \der$, and $S_Q(\lambda):= S_{n,\lambda}$ in case $Q=\der \delta$.
 Here $\Tr_p$ denotes the trace of the operators on the subspace of $p$-forms.
\end{theorem}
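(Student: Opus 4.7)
The approach is to deduce the theorem from the Birman--Krein formula for the full Laplacian established in \cite{OS}, by exploiting the fact that $Q$ commutes with $\Delta$ and selects a single sector of the Hodge decomposition.

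First, an algebraic reduction. The operator $\delta\der$ acts as $\Delta_\rel$ on the closure of $\mathrm{ran}(\overline{\der}^*)$ (the co-closed $L^2$ forms) and as zero on $\overline{\mathrm{ran}(\overline{\der})}$ (the exact forms); the roles are reversed for $\der\delta$. Let $\Pi_Q$ denote the orthogonal projection onto the co-closed $L^2$ forms if $Q=\delta\der$ and onto the closed $L^2$ forms if $Q=\der\delta$, and set $h(\lambda):=\lambda^2 f(\lambda)$, which is again even, smooth, and compactly supported. Then
\[
 Q\,f(\Delta^{1/2}) = h(\Delta^{1/2})\,\Pi_Q,
\]
and the analogous identity holds on $\R^d$ with $\Delta_0$.

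Second, the trace-class assertions. Assumption~\ref{assumeit} combined with Helffer--Sj\"ostrand functional calculus yields that $(1-P)h(\Delta^{1/2})$ lies in a suitable Schatten class, exactly as in \cite{OS}; multiplication by the bounded projection $\Pi_Q$ preserves this, giving trace class for the first term. The free analogue is handled by direct Fourier computation. For the difference term, a finite propagation speed argument shows that the wave groups $\cos(t\Delta^{1/2})$ and $\cos(t\Delta_0^{1/2})$ agree on the exterior up to times controlled by $\supp\hat f$; inserting this into $f(\Delta^{1/2})=\frac{1}{2\pi}\int\hat f(t)\cos(t\Delta^{1/2})\der t$ shows that the relative operator is essentially supported in a fixed compact annulus and is trace class by the standard estimates of \cite{OS}.

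Third, identification of the integrand. Applying the Birman--Krein formula of \cite{OS} to the even test function $h$ expresses the sum of the three traces as $\frac{1}{2\pi\rmi}\int_0^\infty h(\lambda)\Tr(S_\lambda^*S_\lambda'\,\widetilde\Pi_Q(\lambda))\der\lambda$, where $\widetilde\Pi_Q(\lambda)$ is the image of $\Pi_Q$ on the continuum fibre $L^2(\sphere,\Lambda^\bullet\C^d)$ under the Poisson intertwiner $\Phi\mapsto E_\lambda(\Phi)$. The key step is to identify $\widetilde\Pi_Q(\lambda)$ with the projection onto $C^\infty_t(\sphere,\Lambda^\bullet\C^d)$ (respectively $C^\infty_n$): for tangential $\Phi$ the leading asymptotic of $\delta E_\lambda(\Phi)$ vanishes because contracting with $\der r$ respects the tangential/normal splitting, so $\delta E_\lambda(\Phi)$ is an outgoing Helmholtz solution satisfying the relative boundary condition and having no leading asymptotic, hence vanishes by uniqueness; thus $E_\lambda(\Phi)$ is co-closed, and dually for normal data. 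Combined with the block-diagonal structure $S_\lambda=\mathrm{diag}(S_{n,\lambda},S_{t,\lambda})$ and the factor $\lambda^2$ from $h$, the trace restricts in degree $p$ to $\lambda^2 f(\lambda)\Tr_{L^2(\sphere,\Lambda^p\C^d)}(S_Q^*(\lambda)S_Q'(\lambda))$, yielding the stated formula.

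The main obstacle is this last identification: aligning the abstract $L^2$ projection $\Pi_Q$ with the geometric tangential/normal decomposition on the sphere at infinity. It requires careful asymptotic analysis of the Poisson operator and its intertwining with $\der$ and $\delta$, including a uniqueness argument for outgoing Helmholtz solutions on $M^\circ$ that is compatible with the low-regularity relative boundary conditions permitted under Assumption~\ref{assumeit}.
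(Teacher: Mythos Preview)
Your approach has a genuine gap, and the paper's own Remark immediately following the theorem warns against precisely this strategy. The reduction $Q\,f(\Delta^{1/2}) = h(\Delta^{1/2})\,\Pi_Q$ is correct, but the projections $\Pi_Q$ and $\Pi_{Q,0}$ are \emph{different} operators: the Hodge--Helmholtz decomposition of $L^2(M^\circ)$ for $\Delta_\rel$ does not coincide on $M\setminus K$ with the free decomposition of $L^2(\R^d)$. Consequently the relative term
\[
 P\,h(\Delta^{1/2})\,\Pi_Q\,P \;-\; P_0\,h(\Delta_0^{1/2})\,\Pi_{Q,0}\,P_0
\]
cannot be rewritten as the Birman--Krein operator $P\,h(\Delta^{1/2})\,P - P_0\,h(\Delta_0^{1/2})\,P_0$ times a single fixed projection plus a trace-class correction: the obstruction is controlled by $\Pi_Q - \Pi_{Q,0}$, and the paper explicitly observes that this difference is \emph{not} trace-class when the boundary is non-empty (and that $(\delta\overline{\der}+1)^{-N}-(\delta_0\overline{\der}_0+1)^{-N}$ fails to be trace-class for the same reason). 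So your sentence ``Applying the Birman--Krein formula of \cite{OS} to the even test function $h$ expresses the sum of the three traces as $\frac{1}{2\pi\rmi}\int_0^\infty h(\lambda)\Tr(S_\lambda^*S_\lambda'\,\widetilde\Pi_Q(\lambda))\,\der\lambda$'' is the step that fails: the existing formula yields no such projected trace, and inserting $\widetilde\Pi_Q$ into it is exactly the content of Theorem~\ref{main1} that must be proved.

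The paper avoids this obstruction by \emph{not} passing through the Hodge projection at all. Instead it uses the intertwining identities of Proposition~\ref{difftheo},
\[
 \der E_\lambda(\Phi) = -\rmi\lambda\, E_\lambda(\der r\wedge\Phi),\qquad
 \delta E_\lambda(\Phi) = \rmi\lambda\, E_\lambda(\iota_{\der r}\Phi),
\]
to write the integral kernel of $Q\,f(\Delta^{1/2})$ directly as $\frac{1}{2\pi}\sum_\nu\int_0^\infty \lambda^2 f(\lambda)\,E_\lambda(a_Q\Phi_\nu)\otimes E_\lambda(a_Q\Phi_\nu)^*\,\der\lambda$, where $a_Q$ is exterior or interior multiplication by $\der r$. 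This places the tangential/normal selection on the \emph{spherical data} from the outset, so no comparison of Hodge projections is ever needed. The proof then reruns the Maass--Selberg boundary-pairing computation (Proposition~\ref{ibpprop}, Proposition~\ref{EoutofHankel}, Lemma~\ref{Abound}) with $a_Q\Phi_\nu$ in place of $\Phi_\nu$, and the commutation $A_\lambda a_Q = a_Q A_\lambda$ gives $S_Q$ on the right-hand side. Your identification of $\widetilde\Pi_Q$ with the tangential/normal projection is morally the same observation as these intertwining identities, but by first factoring through $\Pi_Q$ on the $L^2$ side you have introduced an object whose relative trace is ill-defined.
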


\begin{rem}
It is tempting to try to reduce this statement to the Birman-Krein formula for forms, we do not however see an easy way to achieve that directly.
The Helmholtz-Hodge-Kodaira decomposition depends heavily on the operator and the boundary conditions. Indeed,
if $\Pi$ denotes the orthogonal projection onto $\overline{\mathrm{rg}(\delta)}$ and $\Pi_0$ the orthogonal projection onto 
$\overline{\mathrm{rg}(\delta_0)}$ then the operator  $\Pi - \Pi_0$ is in general not  trace-class if the boundary is smooth and non-empty. This can be seen by analysing the singular behaviour of the diagonal of its integral kernel near the boundary.
It also follows that $(\delta \overline d+1)^{-N} - (\delta_0 \overline d_0+1)^{-N}$ is not in general a trace-class operator for $N> \frac{d}{2}$.
\end{rem}

In case $X = \R^3$ this becomes a statement about Maxwell  obstacle scattering. We were not able to locate such a statement in the literature even in that special case.
To formulate this it is convenient to define the operator $\Delta_{\rel,\R^3}$ on one forms as an operator on $L^2(\R^3,\C^3) = L^2(M^\circ,\C^3) \oplus L^2(\Omega,\C^3) \oplus L^2(\partial \Omega,\C^3)$
as the direct sum $\Delta_{\rel,M} \oplus \Delta_{\rel,\Omega} \oplus 0$, where $\Delta_{\rel,\Omega}$ is the relative Laplacian on the interior domain $\Omega$. The spectrum of $\Delta_{\rel,\Omega}$ then consists of the Dirichlet eigenvalues $(\lambda_j^2), \lambda_j \geq 0$ of the scalar Laplacian on $\Omega$ and the Maxwell eigenvalues $(\mu_j^2), \mu_j \geq 0$.
The Maxwell eigenvalues are the eigenvalues of $\curl\, \curl$ on the space of divergence free one-forms. We did not assume that the boundary $\partial \Omega$ of $\Omega$ has vanishing Lebesgue measure. We therefore
define $\curl\,\curl$ to be zero on $L^2(\partial \Omega,\C^3)$ so that the operator
  $\curl\,\curl f(\Delta_{\rel,\R^3}^{1/2})$ on $L^2(\R^3,\C^3) = L^2(M^\circ,\C^3) \oplus L^2(\Omega,\C^3) \oplus L^2(\partial \Omega,\C^3)$
  is a direct sum $\curl\,\curl f(\Delta_{\rel,M}^\frac{1}{2}) \oplus \curl\,\curl f(\Delta_{\rel,\Omega}^\frac{1}{2}) \oplus 0$.
 Here, potentially confusingly, $L^2(\partial \Omega,\C^3)$ is defined with respect to the Lebesgue measure on $\R^3$. In particular $L^2(\partial \Omega,\C^3)=0$ if $\Omega$ has Lipschitz boundary.

\begin{theorem} \label{B2}
 Let $f \in C^\infty_0(\R)$ be an even compactly supported smooth function. 
 Then,
 $$
   \curl\,\curl f(\Delta_{\rel,\R^3}^{1/2}) -  \curl\,\curl f(\Delta_0^{1/2})
 $$
 is trace-class as on operator on $L^2(\R^3,\C^3)$ and its trace equals
 \begin{gather*}
 \Tr(\curl\,\curl f(\Delta_{\rel,\R^3}^{1/2}) -  \curl\,\curl f(\Delta_0^{1/2}))\\=
 \frac{1}{2 \pi \rmi }\int_{0}^\infty\lambda^2  f(\lambda) \Tr_{L^2(\mathbb{S}^2,\C^3)} \left ( S_t^*(\lambda) S_t^{\;\prime}(\lambda) \right) \der \lambda + \sum_{j=0}^\infty \mu_j^2 f(\mu_j).
 \end{gather*}
 \end{theorem}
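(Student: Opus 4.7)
The plan is to deduce Theorem~\ref{B2} from Theorem~\ref{main1} by specialising to $d=3$, $p=1$, and $Q=\delta\der$. On one-forms in $\R^3$ the operator $\delta\der$ coincides with $\curl\,\curl$, and with this choice of $Q$ we have $S_Q = S_{t,\lambda}$, so the scattering integral on the right-hand side of Theorem~\ref{main1} is exactly the one appearing in Theorem~\ref{B2}. Since $\Omega \subset K = \overline{B_R(0)}$, the projection $P_0:L^2(\R^3,\C^3)\to L^2(\R^3\setminus\overline{B_R(0)},\C^3)$ annihilates the summands $L^2(\Omega,\C^3)\oplus L^2(\partial\Omega,\C^3)$, and on $L^2(M^\circ,\C^3)$ it coincides with the projection $P$ of Theorem~\ref{main1}; we identify the two.

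Set $A=\curl\,\curl f(\Delta_{\rel,\R^3}^{1/2})$, $A_M=\curl\,\curl f(\Delta_{\rel,M}^{1/2})$, $A_\Omega=\curl\,\curl f(\Delta_{\rel,\Omega}^{1/2})$, and $B=\curl\,\curl f(\Delta_0^{1/2})$. The direct sum $A=A_M\oplus A_\Omega\oplus 0$ together with the elementary identity $\Tr(C)=\Tr(P_0 C P_0)+\Tr((1-P_0) C(1-P_0))$ applied to $C=A-B$ yields
\begin{align*}
\Tr(A-B) &= \Tr_1\bigl(P A_M P - P_0 B P_0\bigr) \\
 &\quad + \Tr_1\bigl((1-P)A_M(1-P) - (1-P_0) B (1-P_0)\bigr) + \Tr(A_\Omega).
\end{align*}
The two grouped traces on the right form precisely the left-hand side of Theorem~\ref{main1} for $p=1$ and $Q=\delta\der$; their sum is therefore the scattering integral of Theorem~\ref{B2}, and by the same theorem each summand is separately trace-class.

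It remains to identify $\Tr(A_\Omega)$ with $\sum_j \mu_j^2 f(\mu_j)$. By Assumption~\ref{assumeit} the spectrum of $\Delta_{\rel,\Omega}$ is discrete. The abstract identity $\overline{\der}\,\overline{\der}=0$ in the closure sense yields the orthogonal Hodge decomposition $L^2(\Omega,\Lambda^1)=\overline{\mathrm{rg}(\overline{\der})}\oplus\overline{\mathrm{rg}(\overline{\der}^*)}\oplus \ker(\Delta_{\rel,\Omega})|_{\Lambda^1}$, each summand invariant under $\Delta_{\rel,\Omega}$. On the first and third summands $\der u=0$, so $\delta\der u=0$; on the middle (co-exact) summand $\delta u=0$, forcing $\Delta_{\rel,\Omega} u = \delta\der u$, so the eigenvalues there are exactly the Maxwell eigenvalues $\mu_j^2$. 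Since $f$ is compactly supported, only finitely many $\mu_j$ contribute, giving the claimed sum and the trace-class property of $A_\Omega$; combined with the previous step this yields trace-class of $A-B$.

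The main obstacle is not the decomposition itself but verifying that the Hodge splitting and the identification $\Delta_{\rel,\Omega}|_{\text{co-exact}}=\delta\der$ survive in the low-regularity setting of Assumption~\ref{assumeit}. Both reduce to the abstract self-adjoint construction of $D=\overline{\der}+\overline{\der}^*$ from Section~1 together with compactness of $(\Delta_{\rel,\Omega}+1)^{-1}$.
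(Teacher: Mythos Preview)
Your approach is the same as the paper's: decompose $A=A_M\oplus A_\Omega\oplus 0$, match the $A_M$-part against $B$ via Theorem~\ref{main1}, and identify $\Tr(A_\Omega)$ with the Maxwell eigenvalue sum. Your justification of the last point through the Hodge splitting is in fact more explicit than the paper's, which simply asserts it.

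There is, however, a logical gap in the trace-class argument. You invoke the identity $\Tr(C)=\Tr(P_0CP_0)+\Tr((1-P_0)C(1-P_0))$ for $C=A-B$, but this identity presupposes that $C$ is trace-class: otherwise the off-diagonal blocks $P_0C(1-P_0)$ and $(1-P_0)CP_0$ need not be trace-class, and in any case the decomposition $C=P_0CP_0+P_0C(1-P_0)+(1-P_0)CP_0+(1-P_0)C(1-P_0)$ has four summands, not two. Theorem~\ref{main1} only tells you that the \emph{diagonal} blocks are trace-class; it says nothing about $P_0C(1-P_0)$. Hence ``combined with the previous step this yields trace-class of $A-B$'' is circular as written. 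The paper closes this gap by first establishing that $A-B$ is trace-class directly from the resolvent gluing estimates and the Helffer--Sj\"ostrand formula (the machinery behind Propositions~\ref{tracecorone}--\ref{tracenew}), and only then decomposes and computes the trace. An equally short fix available to you is to invoke Proposition~\ref{tracenew}, which gives that $(1-P)Qf(\Delta^{1/2})$ and $(1-P_0)Qf(\Delta_0^{1/2})$ (with a single projection, not sandwiched) are already trace-class; since $A_M$ and $B$ are self-adjoint this handles the off-diagonal blocks and makes your argument complete.
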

 
\begin{rem}
 The function class of compactly supported smooth even functions in Theorems \ref{main1} and \ref{B2} can be extended by continuity if both sides of the equality are continuous on a larger space of functions. For example, the known Weyl laws for the scattering phase in case of smooth boundary imply immediately that the formulae also hold for even Schwartz functions (see Remark 6.3 in \cite{OS}) or symbols $f \in S^{-k}(\R)$ of sufficiently negative order $-k$ that are even, as an immediate consequence.
 \end{rem}

The paper is organised as follows. In Section \ref{scatt} we collect the elements of stationary scattering theory needed that are needed for the proof of the main result. The main theorem is proved in Section \ref{secproof}.  In Appendix \ref{relap} we recall the definition and very general basic properties of the relative Laplace operator. Appendix \ref{vecspher} summarises some simple but important properties of vector-spherical harmonics in arbitrary dimension.
   
\subsection{Notational convention} 

All functions in this paper are complex-valued, unless stated otherwise. This means $L^p(X)$ means $L^2(X,\C)$
and $C^\infty(X)$ means $C^\infty(X,\C)$. We will also assume that $p$-forms are complex-valued, but we will be writing $C^\infty(X; \Lambda^\bullet T^*X)$ for  $\Lambda^\bullet T^*X = \Lambda_\C^\bullet T^*X = \Lambda^\bullet T_\C^*X$, mildly abusing notations. Similar conventions will be used for $L^2$ and Sobolev spaces. Inner products are assumed to be conjugate linear in the second argument.  The open unit ball in $\R^d$ centered at zero of radius $\rho>0$ will be denoted by $B_{\rho}$.


\section{Stationary scattering theory and the spectral resolution} \label{scatt}

In this section we collect the basic facts about the spectral theory of $\Delta_{\rel}$ as it follows from stationary scattering theory. For general background on the theory of black-box scattering for functions and current developments we refer to \cite{MR1451399} and the recent monograph \cite{DZ}.

Since $M \setminus K$ is isometric to $\R^d \setminus B_R(0)$ we have a natural coordinate system 
on $M \setminus K$. We will use both Cartesian coordinates $x \in \R^d$ and spherical coordinates
$(r,\theta) \in (R,\infty) \times \mathbb{S}^{d-1}$, where $r=|x|$ and $\theta= \frac{x}{|x|}$, where it is understood. We choose a smooth function $\chi \in C^\infty(M)$
supported in $M \setminus K$ such that $1-\chi$ is compactly supported. Using the Cartesian coordinates and the orthonormal frame $(\der x^1,\ldots,\der x^d)$
we trivialise the bundle $T^*(M \setminus K)$ and thereby identify forms in $C^\infty(M \setminus K; \Lambda^p T^*M)$ with vector-valued functions
in $C^\infty(M \setminus K; \Lambda^p \C^d)$. We will now assume for notational convenience that $\tilde K$ is a large ball of radius $R>0$ so that $M \setminus K$ is identified with $\R^d \setminus B_R(0)$. This way $M$ is decomposed into 
$\R^d \setminus B_R(0)$ and $K^\circ \setminus \Omega$.

Let us make same remarks about the domain of $\Delta_\rel$. First the definition of a Sobolev space makes sense on $X$. Namely, $f \in H^s(X, \Lambda^\bullet T^{*} X)$ if and only if $\chi f \in H^s(\R^d,\Lambda^\bullet \C^d)$ and $(1-\chi) f \in H^s_{\mathrm{loc}}(X)$ if $\chi$
is a smooth cut-off function supported in $X \setminus K$ that equals one outside a compact set. Then the space $C^\infty_0(X,\Lambda^\bullet T^{*} X)$ is dense in $H^s(X,\Lambda^\bullet T^{*} X)$. Using the explicit description of the domain of $\overline d^*$ and the domain of $\overline d$ as the closure of $C^\infty_0(M^\circ,\Lambda^\bullet T^{*} M)$ in the graph norm of $d_c$ it is not hard to show the following proposition.

\begin{proposition} \label{sobain}
 Suppose that $\eta \in C^\infty(M)$ vanishes near $\partial \Omega$ and $1 - \eta \in C^\infty_0(M)$. Then the following statements hold.
 \begin{itemize}
  \item If $\phi \in L^2(M,\Lambda^\bullet T^{*} M)$ and $\Delta\phi \in L^2(M, \Lambda^\bullet T^{*} X)$ imply that $\eta \phi \in \mathrm{dom}(\Delta_\rel)$.
  \item If $\phi \in  \mathrm{dom}(\Delta_\rel)$ then $\eta \phi \in H^2(X,\Lambda^\bullet T^{*} X)$.
 \end{itemize}
\end{proposition}
\begin{proof}
 For the first statement note that $\phi \in L^2(M, \Lambda^\bullet T^{*} X), \Delta\phi \in L^2(M, \Lambda^\bullet T^{*} X)$ implies in particular that $\eta \phi \in H^2(X, \Lambda^\bullet T^{*} X)$. Now $C^\infty_0(X, \Lambda^\bullet T^{*} X)$ is dense in $H^2(X, \Lambda^\bullet T^{*} X)$, but since the support of $\eta \phi$ has positive distance from $\Omega$
 an approximating sequence can be chosen to be in $C^\infty_0(M^\circ, \Lambda^\bullet T^{*} X)$. Thus
 the approximating sequence is in the domain of the operator $\Delta_\rel$, and convergence for this sequence in $H^2(X, \Lambda^\bullet T^{*} X)$ implies convergence in the graph norm for $\Delta_\rel$. Since $\Delta_\rel$ is closed we have that $\eta \phi$ must be in its domain.\\
 For the second statement we use elliptic regularity. Since $\phi$ in $\mathrm{dom}(\Delta_\rel)$ implies in particular that
 $\Delta\phi \in L^2$ in the sense of distributions we obtain $\phi \in H^2_\loc(M^\circ, \Lambda^\bullet T^{*} X)$
 and then, using the product rule, $\Delta(\eta \phi ) \in L^2(M^\circ, \Lambda^\bullet T^{*} X)$. Again, by elliptic regularity, we obtain
 $\eta \phi \in H^2(X,\Lambda^\bullet T^{*} X)$.
\end{proof}

This statement encodes that the boundary conditions implicitly defined by the domain of the operator $\Delta_\rel$ are local near the boundary $\Omega$ and implies that integration by parts is possible and results in no boundary terms from $\partial \Omega$ in case the functions are locally in the domain of $\Delta_\rel$. 

\begin{proposition} \label{ibpprop}
 Let $U$ be a compact submanifold of $X$ with smooth boundary $\partial U$ such that $\Omega \subset U$ and $\partial U \subset X \setminus K$. Let $V = U \setminus \Omega$.
 Suppose that $u,v \in C^\infty(M^\circ, \Lambda^\bullet T^{*} M)$ such that 
 for any cut-off function $\chi \in C^\infty_0(M)$ that equals one near $\partial \Omega$ the form $\chi u$ and 
 $\chi v$ are in $\mathrm{dom}(\Delta_\rel)$. Then
 $$
  \int_{V} \langle \Delta u , v \rangle_x \der \mathrm{Vol}_g(x)   - \int_V \langle  u, \Delta v \rangle_x \der \mathrm{Vol}_g(x) = \int_{\partial U} \langle  u, \nabla_n v \rangle_x - \langle \nabla_n u, v \rangle_x  \der \sigma_{\partial U}(x),
 $$
 where $\nabla_n$ is the covariant derivative with respect to the outward pointing normal vector field on $\partial U$,
 and $\der \sigma_{\partial U}$ is the surface measure on $\partial U$.
\end{proposition}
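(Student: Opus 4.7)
The plan is to separate the rough boundary $\partial \Omega$ from the smooth one $\partial U$ by a cutoff function, use self-adjointness of $\Delta_\rel$ to absorb any would-be contribution at $\partial \Omega$, and apply classical Green's identity to the remaining smooth piece to produce the $\partial U$ boundary integral.

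Concretely, I would choose $\chi \in \CnI(M)$ equal to $1$ in a neighborhood of $\partial \Omega$ with $\supp \chi \cap \partial U = \emptyset$; this is possible because $\partial U \subset X \setminus K$ and $\partial \Omega \subset K$ are disjoint closed sets. Writing $u = u_1 + u_2$ with $u_1 = \chi u$, $u_2 = (1-\chi)u$, and similarly $v = v_1 + v_2$, the hypothesis immediately gives $u_1, v_1 \in \dom(\Delta_\rel)$. After multiplying $u, v$ by an outer cutoff equal to $1$ in a neighborhood of $\overline V$ (which changes neither the integrals over $V$ nor the values near $\partial U$, and preserves the hypothesis since $\chi$ is supported well inside that neighborhood), I may assume $u_2, v_2 \in \CnI(M^\circ, \Lambda^\bullet T^{*}M)$. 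Such forms belong to $\dom(\overline{\der}) \cap \dom(\overline{\der}^*)$ directly, and since both $\der$ and $\delta$ preserve $\CnI(M^\circ, \Lambda^\bullet T^{*}M)$, in fact $u_2, v_2 \in \dom(\Delta_\rel)$ as well.

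The bilinear expansion of $\langle \Delta u, v \rangle - \langle u, \Delta v \rangle$ produces four pieces indexed by $(i,j) \in \{1,2\}^2$. The $(1,1)$ piece and the two mixed pieces each contain a factor ($u_1$ or $v_1$) supported inside $V$ away from $\partial U$, so the relevant integrals over $V$ coincide with the corresponding $L^2(M)$ inner products; since all four objects lie in $\dom(\Delta_\rel)$ and $\Delta_\rel$ acts as the pointwise differential operator $\Delta$ on smooth elements of its domain, self-adjointness of $\Delta_\rel$ forces these three terms to cancel. For the $(2,2)$ piece, $u_2$ and $v_2$ vanish in a neighborhood of $\partial \Omega$ and therefore extend by zero to smooth forms on the compact submanifold $U$ with smooth boundary $\partial U$; classical Green's identity for $\Delta$ on $U$ yields precisely $\int_{\partial U} \langle u_2, \nabla_n v_2 \rangle - \langle \nabla_n u_2, v_2 \rangle\, \der \sigma_{\partial U}$. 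Since $\chi \equiv 0$ near $\partial U$ we have $u_2 = u$ and $v_2 = v$ there, which matches the claimed right-hand side.

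The main care is in the support bookkeeping and in verifying that the pointwise Laplacian on each smooth cutoff piece agrees with the abstract operator $\Delta_\rel$ acting on it; this is automatic since each piece is smooth on $M^\circ$ and lies in $\dom(\Delta_\rel)$, so the distributional identification $\Delta_\rel w = \Delta w$ applies pointwise. Once this is in place, no boundary contribution from $\partial \Omega$ can arise because it is entirely absorbed into the relative boundary conditions encoded in $\dom(\Delta_\rel)$.
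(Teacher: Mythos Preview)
Your proposal is correct and follows essentially the same strategy the paper sketches: split $u,v$ via a cutoff into a piece in $\dom(\Delta_\rel)$ (handled by self-adjointness) and a piece supported away from $\partial\Omega$ (handled by classical Green's identity on a domain with smooth boundary). One small point worth making explicit, which the paper notes and you leave implicit, is that near $\partial U$ the bundle metric is the standard Euclidean one, so the boundary term produced by Green's identity for the generalised $\Delta$ really does take the form $\langle u,\nabla_n v\rangle-\langle \nabla_n u,v\rangle$; in the interior of $U$ the principal symbol need not be scalar, but the boundary expression depends only on the operator at $\partial U$.
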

\begin{proof}
 This is proved directly by decomposing the functions $u,v$ into functions supported near $\partial U$ and functions in the domain of the operator, using integration by parts and self-adjointness of $\Delta_\rel$. Note that the inner product on $\Lambda^\bullet T^* M$ equals the Euclidean (metric) inner product near $\partial U$.
\end{proof}

\begin{proposition}
 Let $L: C^\infty(M^\circ,\Lambda^\bullet T^*M) \to C^\infty(M^\circ,\Lambda^\bullet T^*M)$ be a first order differential operator and $u \in \mathcal{D}'(M^\circ,\Lambda^\bullet T^*M)$ satisfies $(\Delta + L) u=0$. If $u$ vanishes on a non-empty open subset of $M^\circ$, then $u=0$ on $M^\circ$.
\end{proposition}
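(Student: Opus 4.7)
My plan proceeds in three steps: elliptic regularity, a connectedness argument, and a local unique continuation theorem for strongly elliptic second-order systems. First, since $\Delta+L$ is a second-order elliptic operator with smooth coefficients, bootstrapping elliptic regularity for the equation $\Delta u=-Lu$ (the right-hand side gaining one derivative of Sobolev regularity at each step) promotes the distributional solution $u$ to a smooth section $u\in C^\infty(M^\circ,\Lambda^\bullet T^*M)$.

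Next, define
\[
 Z=\{x\in M^\circ\mid u\ \text{vanishes on some open neighborhood of}\ x\}.
\]
By hypothesis $Z$ is non-empty, and by construction $Z$ is open. Since $M^\circ$ is connected, it suffices to show that $Z$ is closed in $M^\circ$, i.e., that if $x_0\in M^\circ$ is a limit of a sequence in $Z$, then $u$ vanishes on a full neighborhood of $x_0$. Working in Cartesian coordinates on a small ball around $x_0$ contained in $M^\circ$ and trivialising $\Lambda^\bullet T^*M$ by a smooth local frame, the equation $(\Delta+L)u=0$ becomes a second-order matrix differential equation with smooth coefficients for the vector of components of $u$. Its principal symbol equals $\e(\xi)\e(\xi)^*+\e(\xi)^*\e(\xi)$ and is Hermitian positive-definite for $\xi\neq 0$: a vector $v$ in its kernel satisfies $\|\e(\xi)v\|^2+\|\e(\xi)^*v\|^2=0$, which by condition (3) of the introduction forces $v=0$.

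The local statement then follows from a Carleman-estimate based weak unique continuation theorem for strongly elliptic second-order systems with smooth coefficients. I expect the principal technical obstacle to be that the principal symbol of $\Delta$ is in general not a scalar multiple of the identity (except when the bundle metric is the Euclidean one), so that Aronszajn's classical theorem for scalar elliptic operators does not apply verbatim. However, since the principal symbol is smooth, Hermitian, and positive-definite, the standard Carleman estimate with exponential weight $\ee^{\tau\varphi}$ for $\varphi$ strongly pseudoconvex relative to the principal symbol goes through and yields unique continuation across small geodesic spheres. Foliating a neighborhood of $x_0$ by such spheres centered at a nearby point of $Z$ and iterating along the foliation propagates vanishing of $u$ into a full neighborhood of $x_0$, which closes the argument.
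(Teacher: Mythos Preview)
Your architecture---elliptic regularity to get smoothness, then a connectedness argument that reduces the global statement to a local unique continuation property---is exactly the paper's. The gap is in the local step.

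You correctly isolate the difficulty: the principal symbol of $\Delta$ is Hermitian and positive-definite but, for a general bundle metric, not a scalar multiple of the identity, so Aronszajn--Cordes does not apply verbatim. Your proposed resolution is the assertion that ``the standard Carleman estimate with exponential weight \ldots goes through'' for any second-order system with smooth, Hermitian, positive-definite principal symbol. That is precisely the point at which the argument becomes non-routine, and it is not a standard result you can simply invoke: unique continuation for elliptic \emph{systems} with genuinely matrix-valued principal part is delicate, and strong ellipticity alone is not a recognised sufficient hypothesis in the Carleman-estimate literature for systems. You supply neither a reference nor an argument here, so this step is unjustified as written.

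The paper handles the local step differently. At a boundary point $x$ of the interior of the zero set (where smoothness forces $u$ to vanish to infinite order), one passes to Riemann normal coordinates and chooses a local bundle frame that is orthonormal with respect to the given Hermitian bundle metric; in that frame the principal symbol of $\Delta$ is diagonal. The system thus decouples at top order into scalar second-order elliptic operators, with coupling only through lower-order terms, and one can then invoke the strong unique continuation theorem of Sanada \cite{MR2361423} for systems of this type. Strong unique continuation at every such $x$ yields weak unique continuation on the connected manifold $M^\circ$, exactly as in your connectedness argument. The specific frame choice that brings the principal symbol into diagonal form---thereby reducing to a class of systems for which a concrete unique continuation theorem is available---is the idea your sketch is missing.
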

\begin{proof}
 By elliptic regularity $u$ is smooth. Let $x$ be in the boundary of the interior of the zero set of $u$. Then $u$ vanishes of infinite order at $x$. Now choose Riemann normal coordinates at $x$ and a bundle-frame that is orthonormal with respect to the bundle metric, so that the principal symbol of $\Delta_{\rel}$ in that frame is diagonal. Then strong unique continuation at that point follows from \cite{MR2361423}. 
Strong unique continuation at every point then implies the weak unique continuation property.
\end{proof}

Since $\Delta_{K,\rel}$ is an elliptic differential operator with compact resolvent the gluing method of black-box scattering (see e.g. \cite{MR1451399}) can be applied. This means that the resolvent $(\Delta_\rel -\lambda^2)^{-1}$
can be written as
$$
 (\Delta_\rel -\lambda^2)^{-1} = \left( \chi_1 (\Delta_0 -\lambda^2)^{-1} \eta_1 + \chi_2 (\Delta_{\Omega_R} -\lambda^2)^{-1} \eta_2 \right) (1 + K_\lambda).
$$
Here $\chi_1,\eta_1,\chi_2,\eta_2$ are suitably defined smooth gluing functions with $\chi_1,\eta_1$ vanishing near $\Omega$
and $1-\chi_1,1-\eta_1,\chi_2,\eta_2$ compactly supported. The family $K_\lambda$ is a meromorphic family of compact operators on a logarithmic cover of the complex plane mapping into functions supported in the gluing region. The negative Laurent coefficients of $K$ are finite rank. The function $\tilde j_\lambda(\Phi)$ is defined in spherical coordinates on $X \setminus K$ by
 \begin{align*} 
\tilde j_\lambda(\Phi)(r \theta) &=2\lambda^{\frac{d-1}{2}} j_{d,\ell}(\lambda r) (-\rmi)^{\ell} \Phi(\theta), \quad
 j_{d,\ell}(x)=\sqrt{\frac{\pi}{2}}x^{\frac{2-d}{2}}J_{\ell + \frac{d-2}{2}}(x),
 \end{align*}
 if $\Phi$ is a spherical harmonic of degree $\ell$ and extends linearly and continuously for general $\Phi \in C^\infty(\sphere,\Lambda^\bullet \C^d)$.
\begin{definition} \label{hankelsums}
 We define $\tilde h^{(1)}_{\lambda}(\Phi)$, and  $\tilde h^{(2)}_{\lambda}(\Phi)$ by 
 \begin{align*}
  \tilde h^{(1)}_\lambda(\Phi)(r \theta) &=\lambda^{\frac{d-1}{2}} \sum\limits_{\nu} \langle \Phi, \Phi_\nu \rangle_{L^2(\sphere)} \Phi_{\nu}(\theta)h^{(1)}_{\ell_{\nu}}(\lambda r) (-\rmi)^{\ell_{\nu}},\\
   \tilde h^{(2)}_\lambda(\Phi)(r \theta) &=\lambda^{\frac{d-1}{2}} \sum\limits_{\nu} \langle \Phi, \Phi_\nu \rangle_{L^2(\sphere)} \Phi_{\nu}(\theta)h^{(2)}_{\ell_{\nu}}(\lambda r) (-\rmi)^{\ell_{\nu}},
 \end{align*}
 whenever the sums converge in $C^{\infty}(\mathbb{R}^{d}\setminus \{0\})$. Here $h^{(1)}_\ell$, and $h^{(2)}_\ell$ are the spherical Hankel functions in dimension $d$ defined as
 \begin{gather*}
  h^{(1)}_{d,\ell}(x)=\sqrt{\frac{\pi}{2}}x^{\frac{2-d}{2}}H^{(1)}_{\ell + \frac{d-2}{2}}(x),\quad
  h^{(2)}_{d,\ell}(x)=\sqrt{\frac{\pi}{2}}x^{\frac{2-d}{2}} H^{(2)}_{\ell + \frac{d-2}{2}}(x).
\end{gather*}
\end{definition}
 Throughout the paper, if $z \not=0$ is an element of the logarithmic cover of the complex plane, we will define $-z = \mathrm{e}^{\rmi \pi} z$ which corresponds to a counterclockwise rotation by $\pi$. Some care is needed with this notation, however, since then $-(-z)$ is on a different sheet than $z$. The complex conjugate of $z= r \ee^{\rmi \phi}$ in the logarithmic cover is defined by $\overline{z} = r \ee^{-\rmi \phi}$. For $z>0$ the complex conjugate
$\overline{-z}$ of $-z$ is then also on another branch than $-z$, namely $\overline{-z} = \ee^{-\rmi \pi} z$.
 The properties of the Hankel functions imply that
 \begin{gather} \label{rotationhankel1}
  h^{(1)}_{d,\ell}(x e^{\rmi \pi}) = - (-1)^{\ell+d} h^{(2)}_{d,\ell}(x),\\   \label{rotationhankel2}
  h^{(2)}_{d,\ell}(x e^{\rmi \pi})=  (-1)^{\ell} \left( h^{(1)}_{d,\ell}(x) + (1+(-1)^d) h^{(2)}_{d,\ell}(x) \right).
\end{gather}
Using a smooth cut-off function $\chi$ that vanishes near $K$ and equals one outside a compact set one constructs the generalised eigenforms as
\begin{align} \label{Econstr}
 E_\lambda(\Phi) = \chi \tilde j_\lambda(\Phi) -  (\Delta_\rel -\lambda^2)^{-1} [\Delta ,\chi] \tilde j_\lambda(\Phi)
\end{align}
This formula, \eqref{Econstr} and the above definition gives the following proposition: 
\begin{proposition} \label{EoutofHankel}
 For every $\lambda \in \R \setminus \{0\}$ and $\Phi \in L^2(\sphere, \Lambda^p\mathbb{R}^d)$ there exists a unique $A_\lambda(\Phi) \in C^\infty(\sphere, \Lambda^p\mathbb{R}^d)$ such that
 \begin{gather*}
  E_{\lambda}(\Phi)|_{M \setminus K} = \tilde{j}_{\lambda}(\Phi) + \tilde h^{(1)}_\lambda(A_\lambda \Phi).
 \end{gather*}
\end{proposition}
c.f. Proposition 2.2 Eq. 4 in \cite{OS} which has now been generalized. Rellich's uniqueness and the unique continuation property of $\Delta$ imply that there are no eigenvalues other than zero for $\Delta_\rel$, and $(\Delta_\rel -\lambda^2)^{-1}$ has no poles in $\R \setminus \{0\}$. The multiplicity of the zero eigenspace is finite. In case of odd dimension this follows from the fact that the resolvent is meromorphic near zero with finite rank negative Laurent coefficients. In case of even dimensions there is a convergent Hahn-expansion, again with finite rank negative expansion coefficients. We refer to \cite{OS} for the proofs carried out for the case when $\Omega$ is smooth. 

As a consequence of the above Propositions the spectral results of \cite{OS} carry over to the setting described in this paper. More precisely, Th. 1.5, 1.6, 1.7, 1.9, 1.10, 1.11 hold in this context. The expansion of Th 1.4, 1.8 also hold with the modification that the $O_{C^\infty(M)}$ terms need to be replaced by $O_{C^\infty(M^\circ)}$. This modification is necessary only because elliptic regularity was used in the proofs, in in our general context this does not hold any more up to the boundary. The Birman-Krein formula Theorem 6.1 of \cite{OS} also holds with the same proof, with the only modification that Theorem 2.13 needs to be replaced by the theorem below.
We only re-state here the results necessary to prove our main result.
Let $\{ u_1,\ldots, u_N \}$ be an orthonormal basis in $\mathrm{ker}(\Delta_\rel)$. We let $(\Phi_\mu)_\mu$ is an orthonormal basis in $L^2(\sphere,\Lambda^p \C^d)$ of spherical harmonics $\Phi_\mu$ of degree $\ell_\mu$.  Then, each eigenfunction $u_j$ admits a multipole expansion 
$$
 u_j = \sum_{\nu} a_{\nu,j} \frac{1}{r^{\ell_{\nu}+d-2}} \Phi_\nu.
$$
For $\Phi \in L^2(\sphere;\Lambda^p \C^d)$ define
$$
 a_j(\Phi) := \sum_{\nu} \overline{a_{\nu,j}} \langle \Phi, \Phi_\nu \rangle,
$$
whenever the sum converges absolutely. In particular the sum is finite when $\Phi$ is a finite linear combination of spherical harmonics. We then have bounds and expansions for the scattering amplitude from \cite{OS} that are summarised in the proposition below.

\begin{proposition} \label{Aone}
There exists an open neighborhood $U_0$ of $\R \setminus \{0\}$ in $\C$ such that the following holds.
Let  $U \subset \C$ be the union of $U_0$ and the  upper half space $\{\lambda \mid \Im(\lambda)>0\}$. Then the scattering matrix $S_\lambda$ extends to a holomorphic function on $U$ with values in the bounded operators $L^2(\sphere, \Lambda^\bullet \C^d) \to L^2(\sphere, \Lambda^\bullet \C^d)$. Recall that $A_\lambda= S_\lambda - \mathrm{id}$ is the scattering amplitude. We then have the following.
\begin{itemize}
 \item[(i)] If $d \geq 3$ is odd, then for any $s \in \R$ the family $A_\lambda$ is a holomorphic family of bounded operators on $U$ with values in $\mathcal{B}(L^2(\sphere),H^s(\sphere))$ and as such extends holomorphically to an open neighborhood of $0$ in $\C$.
 \item[(ii)] If $d \geq 2$ is even the family $A_\lambda$ is a holomorphic family of operators on $U$ 
 with values in $\mathcal{B}(L^2(\sphere),H^s(\sphere))$ for any $s \in \R$. We have for any $s \in \R$ the inequality $\| A_\lambda \|_{L^2\to H^s} = O(|\lambda|^{d-2})$ as $|\lambda| \to 0$ in $U$.
 \item[(iii)] If $d=2$ then $\| A_\lambda \|_{L^2\to H^s} = O(\frac{1}{|\log(\lambda)|})$ as $|\lambda| \to 0$ in $U$.
 \item[(iv)] If $\Phi$ is a spherical harmonic of degree $\ell$, then
 \begin{gather*}
 \langle A_\lambda \Phi, \Phi_\nu \rangle = \left( -\frac{\rmi}{2} (d-2+2\ell) (d-2+2\ell_\nu)C_{d,\ell} \overline{C_{d,\ell_\nu} }\sum_{j=1}^N a_j(\Phi) \overline{a_j(\Phi_\nu)} \right) \lambda^{\ell +\ell_\nu + d-4} + r(\lambda),
  \end{gather*}
  where $r(\lambda) = O(\lambda^{\ell +\ell_\nu + d-4})$ for $|\lambda|<1$ and $C_{d,\ell}$ is defined by
$$
C_{d,\ell} = (-\rmi)^{\ell}\sqrt{2 \pi} \frac{1}{2^{\ell + \frac{d}{2}-1}} \frac{1}{\Gamma(\ell + \frac{d}{2})}.
$$
\item[(v)] There exists a constant $R_1>0$ such that for $\lambda$ in the rectangle $0<\Im(\lambda)<1, -1<\Re(\lambda)<1$ we have the bound
 $$
  | \langle A_\lambda \Phi_\nu, \Phi_\mu \rangle | = O\left(R_1^{\ell_\mu+\ell_\nu} \frac{\lambda^{\ell_\nu + \ell_\mu + d-4}}{\Gamma(\ell_\nu +\frac{d}{2}) \Gamma(\ell_\mu +\frac{d-2}{2})}\right).
 $$
\item[(vi)] We have the following functional relationship for the scattering matrix
$$
 A_{\overline\lambda}^* = (-1)^{d-1} \tau \;A_{-\lambda} \; \tau
$$ 
where $\tau: C^\infty(\sphere, \Lambda^p\mathbb{R}^d) \to C^\infty(\sphere, \Lambda^p\mathbb{R}^d), f(\theta)\mapsto f(-\theta)$ is the pull-back of the antipodal map.
 \end{itemize}
\end{proposition}
We add here some guidance as to where in \cite{OS} these results can be found. In the even dimensional case the neighborhood of $\R \setminus \{0\}$ can normally not be chosen to include zero. In \cite{OS} the corresponding expansions are proved in sectors of the complex plane and the theory of Hahn-holomorphic functions was used to give precise statements about convergence in fixed sectors of the logarithmic cover of the complex plane. We have decided here to not use this language and therefore these neighborhoods will typically exclude the negative imaginary axis. 
Parts (i) and the first part of (ii) are a consequence of Corollary 2.8 in \cite{OS}. The second part of (ii) follows from Theorem 4.2, first case, and by extension (iv) follows from Theorem 1.10 which is proved using Theorem 4.2 and 1.4. The expansions in Theorem 4.2 and Theorem 1.4 in \cite{OS} still hold locally uniformly as a result of Propositions in Section 4. Part (iii) follows from Theorem 4.2 applied with $d=2$. Each of the expansions in the proof is holomorphic in $U$, and can be differentiated. Part (v) follows from Lemma 2.10 in \cite{OS} again since each of the expansions still holds locally uniformly. Part (vi) is a consequence of the definition of $A_{\lambda}$ and Proposition \ref{difftheo} below.

We are going to use only certain components of the spherical harmonics in our relative trace formulae. Therefore we also recall the following result of Proposition 2.6 and equations (7) and (8) from \cite{OS} which still hold in our more general setting. The proof relies on the uniqueness of the generalised eigenforms.
If $\Phi \in C^\infty(\sphere,\Lambda^p \C^d)$ then $\der r \wedge \Phi \in C^\infty(\sphere,\Lambda^{p+1} \C^d)$ and $\iota_{\der r}  \Phi \in C^\infty(\sphere,\Lambda^{p+1} \C^d)$, where $\iota_{\der r}$ is interior multiplication of differential forms by $\der r$. 
\begin{proposition} \label{difftheo}
In case $d$ is odd we have
\begin{gather*}
 E_{-\lambda}(\Phi) = (\rmi)^{d-1 }E_{\lambda}(\tau \; S_{-\lambda} \Phi),\quad
 S_\lambda \; \tau \; S_{-\lambda} =  \tau,
\end{gather*}
and in case $d$ is even we have
\begin{gather*} 
 E_{-\lambda}(\Phi) = (\rmi)^{d-1} E_{\lambda}(\tau ( 2\;\mathrm{id} - S_{-\lambda}) \Phi),\quad
  S_\lambda \; \tau \; (2 \; \mathrm{id} - S_{-\lambda}) = \tau.
\end{gather*}
Moreover, the following equalities hold,
\begin{gather*} 
 \der E_\lambda(\Phi) = -\rmi \lambda E_\lambda(\der r \wedge \Phi),\quad  \delta E_\lambda(\Phi) = \rmi \lambda E_\lambda(\iota_{\der r}\Phi),\\
 \der r \wedge S_\lambda \Phi = S_\lambda \der r \wedge \Phi, \quad  \iota_{\der r} S_\lambda \Phi = S_\lambda \iota_{\der r} \Phi.
\end{gather*} 
\end{proposition}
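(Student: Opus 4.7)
The plan is to reduce every assertion in the proposition to the uniqueness property of $E_\lambda(\Phi)$ established in the preceding proposition: any form $u$ satisfying $(\Delta-\lambda^2)u=0$, the local domain condition $\chi u\in\dom(\Delta_{\rel})$ for suitable cut-offs $\chi$, and with a prescribed leading incoming-wave coefficient is uniquely determined. I would construct, for each identity, a candidate form and verify these three conditions, then read off the outgoing coefficient to extract the scattering-matrix relation.

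For the two differential identities, note first that $\der$ and $\delta$ commute with $\Delta$, since $\der\Delta=\der(\der\delta+\delta\der)=\der\delta\der=\Delta\der$ and analogously for $\delta$. Thus $(\Delta-\lambda^2)\der E_\lambda(\Phi)=0$. The domain condition transfers via the decomposition $\chi\der E_\lambda(\Phi)=\der(\chi E_\lambda(\Phi))-\der\chi\wedge E_\lambda(\Phi)$, valid for any $\chi\in C_0^\infty(M)$ with $\der\chi=0$ near $\partial\Omega$: the second summand is smooth and compactly supported in $M^\circ$ and hence lies in $\dom(\Delta_{\rel})$, while the first is handled using $\chi E_\lambda(\Phi)\in\dom(\Delta_{\rel})$ together with $\der^2=0$, appealing to Proposition \ref{sobain} to pass from the $H^2$-regularity away from $\partial\Omega$ to membership in the domain. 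Finally, differentiating the asymptotic expansion of $E_\lambda(\Phi)$ termwise, the leading-order $r$-derivative of $e^{\mp\rmi\lambda r}$ produces incoming-wave coefficient $-\rmi\lambda\,\der r\wedge\Phi$, which by uniqueness identifies $\der E_\lambda(\Phi)$ with $-\rmi\lambda E_\lambda(\der r\wedge\Phi)$. The $\delta$-identity is parallel, using the Leibniz rule $\delta(f\omega)=f\delta\omega-\iota_{\nabla f}\omega$ on the Euclidean end: $\nabla(e^{-\rmi\lambda r}/r^{(d-1)/2})$ has leading term $-\rmi\lambda e^{-\rmi\lambda r}/r^{(d-1)/2}\,\partial_r$, producing leading incoming coefficient $\rmi\lambda\,\iota_{\der r}\Phi$.

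The commutation relations of $S_\lambda$ with $\der r\wedge$ and $\iota_{\der r}$ then come from equating the outgoing coefficients of the two expansions of $\der E_\lambda(\Phi)$: direct differentiation yields outgoing coefficient $+\rmi\lambda\,\der r\wedge\tau S_\lambda\Phi$ (the sign flipping relative to the incoming side because $\partial_r e^{+\rmi\lambda r}=+\rmi\lambda e^{+\rmi\lambda r}$), whereas $-\rmi\lambda E_\lambda(\der r\wedge\Phi)$ has outgoing coefficient $-\rmi\lambda\,\tau S_\lambda(\der r\wedge\Phi)$. Using $\tau(\der r)=-\der r$ (since the covector field $\der r|_\theta$ reverses under $\theta\mapsto-\theta$) and hence $\tau(\der r\wedge\Phi)=-\der r\wedge\tau\Phi$, this reduces to $\der r\wedge S_\lambda\Phi=S_\lambda(\der r\wedge\Phi)$. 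The analogous comparison for $\delta E_\lambda(\Phi)$ yields $\iota_{\der r}S_\lambda\Phi=S_\lambda\iota_{\der r}\Phi$.

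For the relations between $E_{-\lambda}(\Phi)$ and $E_\lambda$, expand $E_{-\lambda}(\Phi)$ from its defining property at wavenumber $-\lambda$: the $e^{-\rmi\lambda r}$-coefficient is $e^{-\rmi\pi(d-1)/4}\tau S_{-\lambda}\Phi$, the $e^{\rmi\lambda r}$-coefficient is $e^{\rmi\pi(d-1)/4}\Phi$. Multiplying $E_\lambda(\tau S_{-\lambda}\Phi)$ by $\rmi^{d-1}=e^{\rmi\pi(d-1)/2}$ produces $e^{-\rmi\lambda r}$-coefficient $e^{3\rmi\pi(d-1)/4}\tau S_{-\lambda}\Phi$, which matches that of $E_{-\lambda}(\Phi)$ exactly when $e^{\rmi\pi(d-1)}=1$, i.e.\ when $d$ is odd. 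In that case uniqueness gives $E_{-\lambda}(\Phi)=\rmi^{d-1}E_\lambda(\tau S_{-\lambda}\Phi)$, and comparing outgoing coefficients yields $S_\lambda\tau S_{-\lambda}=\tau$. In even dimension, the obstruction $e^{\rmi\pi(d-1)}=-1$ is compensated by replacing $\tau S_{-\lambda}$ with $\tau(2\,\mathrm{id}-S_{-\lambda})$: at integer $\nu=\ell+(d-2)/2$ the Bessel identity $J_\nu(-x)=(-1)^\nu J_\nu(x)$ together with the corresponding relation for $H^{(1)}_\nu$, which introduces $H^{(2)}_\nu=2J_\nu-H^{(1)}_\nu$, feeds through the representation of $E_\lambda$ in Proposition \ref{EoutofHankel} to produce exactly the $2\,\mathrm{id}$ correction, and $S_\lambda\tau(2\,\mathrm{id}-S_{-\lambda})=\tau$ follows by the same outgoing-coefficient comparison. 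The main technical obstacle is this even-dimensional Bessel bookkeeping; the subsidiary but essential point is verifying the local domain condition for $\der E_\lambda$ and $\delta E_\lambda$ in the Lipschitz setting through Proposition \ref{sobain}.
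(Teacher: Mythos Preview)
Your proposal is correct and takes essentially the same approach as the paper. The paper does not supply a detailed proof here but recalls the result from \cite{OS} (Proposition 2.6 and equations (7)--(8) there), explicitly noting that ``the proof relies on the uniqueness of the generalised eigenforms'' --- which is precisely your strategy of verifying the three defining properties for each candidate form and reading off the outgoing coefficient.
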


Since $ \Phi  \mapsto \der r \wedge \iota_{\der r}  \Phi $ is the orthonormal projection onto $L^2_{n}(\sphere,\Lambda^p \C^d)$
and $ \Phi  \mapsto \iota_{\der r} \der r \wedge \Phi$  is the orthonormal projection onto $L^2_{t}(\sphere,\Lambda^p \C^d)$ this proposition implies the claimed splitting of the scattering matrix. We will also need the following bound on the matrix elements of the scattering amplitude, which we recall is a result of Lemma 2.10 in \cite{OS}.

Finally, the basis of eigenfunctions $\{ u_1,\ldots, u_N \}$ in $\mathrm{ker}(\Delta_\rel)$ together with the generalised eigenfunctions $E_\lambda(\Phi)$ provide a complete spectral resolution of $\Delta_\rel$.

The following gives the spectral decomposition in appropriate functions spaces. This was stated in \cite{OS}, (Theorem 2.11) for the case of smooth boundary and the same theorem holds here with the modification that convergence in $C^\infty(M \times M)$ needs to be replaced by convergence in $C^\infty(M^\circ \times M^\circ)$. The reason is that we do not have elliptic boundary regularity in our setting. 

\begin{proposition}\label{kernelconv}
 If $h$ is a Borel function with $h = O((1+\lambda^2)^{-q})$ for all $q \in \mathbb{N}$ we have that $h(\Delta_{p,\mathrm{rel}})$ has smooth integral kernel
 $k_h \in C^\infty(M^\circ \times M^\circ; \Lambda^\bullet T^*M \boxtimes (\Lambda^\bullet T^*M)^*)$ and 
 \begin{gather}
 k_h(x,y) = h(0) \sum_{j=1}^N u_j(x) \otimes (u_j(y))^* \nonumber \\+ \frac{1}{2\pi}  \sum\limits_{\nu}  \int_{0}^\infty h(\lambda^2) E_{\lambda}(\Phi_\nu)(x) \otimes E_{\lambda}(\Phi_\nu)^*(y)\,\der \lambda, \label{eleven}
\end{gather}
where the sum converges in $C^\infty(M^\circ \times M^\circ; \Lambda^\bullet T^*M \boxtimes (\Lambda^\bullet T^*M)^*)$.
\end{proposition}
We briefly outline the adaption of the argument to our setting. If $f \in C_0^\infty(M^\circ,\Lambda^\bullet T^*M)$ and $\lambda>0$ we have
 \begin{align} \label{diffresold}
(R_\lambda-R_{-\lambda})f=\frac{\rmi}{2\lambda}\sum\limits_\nu E_{\lambda}(\Phi_\nu)\langle f, E_{\overline{\lambda}}(\Phi_\nu)\rangle,
\end{align} 
where convergence is in $C^\infty(M^\circ,\Lambda^\bullet T^*M)$. This follows as usual from the characterisation of outgoing and incoming solutions, Rellich's uniqueness theorem, and integration by parts.
Since the resolvent is meromorphic, with a possible pole of order at most two only at zero, one can directly compare with Stone's formula to express the spectral measure $\langle f \der E_\lambda, f \rangle$ in terms of $\langle f, (R_\lambda-R_{-\lambda}) f \rangle$ and $\sum_j |\langle u_j,f \rangle|^2 \delta_0(\lambda)$. Hence, formula \eqref{eleven} holds in the sense of distributions for any bounded Borel function $h$. This can then be improved to convergence in $C^\infty(M^\circ \times M^\circ; \Lambda^\bullet T^*M \boxtimes (\Lambda^\bullet T^*M)^*)$ using the fact that  $ (1 + \Delta_\rel)^s h(\Delta_\rel) (1 + \Delta_\rel)^s$ is $L^2$-bounded for any $s \geq 0$. This uses only the inclusions  $H^s_\comp(M^\circ)  \subset \mathrm{dom}(\Delta_\rel^{s/2}) \subset H^s_\loc(M^\circ)$ for $s \geq 0$, which follow from Proposition \ref{sobain} and elliptic regularity in case $\frac{s}{2} \in \mathbb{N}_0$. 

\section{Proof of the Main Theorems} \label{secproof}

We first need to show that the corresponding operators are trace-class. This is true for 
$(1-P_0)Q f(\Delta_0) (1-P_0)$ since this operator has a smooth integral kernel on the compact manifold $B_R(0)$ with smooth boundary. To show the trace-class property for the two other operators we will use a modification of the gluing method used in \cite{OS} (see \cite{MR1451399} for this method in the context of black-box scattering). We will consider the operator $\Delta_{\Omega_R}$. 
Recall that our assumption implies that $(\Delta_{\Omega_R} + 1)^{-k+1}$ is trace-class for some $k>0$. We assume here w.l.o.g. that $2k > d +1$.
Using
$$
 (\Delta_{\Omega_R} -\lambda^2)^{-1}  = (1 + \lambda^2)(\Delta_{\Omega_R} -\lambda^2)^{-1} (\Delta_{\Omega_R} +1)^{-1} + (\Delta_{\Omega_R}+1)^{-1} 
$$
we conclude that $(\Delta_{\Omega_R} -\lambda^2)^{-1}$ is in the $k$-th Schatten class for $\Im(\lambda)>0$ and the Schatten norm satisfies the uniform bound
$$
  \| (\Delta_{\Omega_R} -\lambda^2)^{-1} \|_k \leq C (1 + | \lambda |^2) \frac{1}{\Im(\lambda^2)}.
$$
Since $(\Delta_{\Omega_R} +1 )^{-1} Q$ is bounded by $1$ in the operator norm as a consequence of Lemma \ref{Tstar} we then see that
$$(\Delta_{\Omega_R} +1)^{-k}  Q (\Delta_{\Omega_R} -\lambda^2)^{-1} $$ is trace-class and its trace norm is bounded by
$$
 \| (\Delta_{\Omega_R} +1)^{-k}  Q (\Delta_{\Omega_R} -\lambda^2)^{-1}   \|_1 \leq C (1 + | \lambda |^2) \frac{1}{\Im(\lambda^2)}.
$$
Next we consider a set of smooth cut-off functions $\chi_1,\eta_1,\chi_2,\eta_2 \in C^\infty(M)$ suitably chosen such that
\begin{gather*}
 \eta_1 \chi_1 = \eta_1, \quad \eta_2 \chi_2 = \eta_2, \quad  \eta_1 + \eta_2 = 1,\\
\supp \chi_1 \subset X\setminus \overline{\Omega}, \quad \supp \chi_2 \subset K,\quad
\mathrm{dist}(\supp \chi_1',\eta_1)>0, \quad \mathrm{dist}(\supp \chi_2',\eta_2)>0.
\end{gather*}
The functions are chosen so that $\eta_1, \chi_1$ vanish near $\overline{\Omega}$ and are equal to one in a neighborhood of $X \setminus K^\circ$. The functions $\eta_2, \chi_2$ are chosen to be one in a neighborhood of 
$\overline{\Omega}$.
Now consider the operator $$T_\lambda = (\Delta_{\rel} +1)^{-k} (\Delta_{\rel} - \lambda^2)^{-1}  Q.$$ Similarly, let
\begin{gather*}
 T_{0,\lambda} = (\Delta_{0} +1)^{-k} Q(\Delta_{0} - \lambda^2)^{-1},\\
 T_{\Omega_R,\lambda} = (\Delta_{\Omega_R} +1)^{-k} Q(\Delta_{\Omega_R} - \lambda^2)^{-1},
\end{gather*}
and define
$$
 \tilde T_\lambda =  \chi_1 T_{0,\lambda} \eta_1 +  \chi_2 T_{\Omega_R,\lambda} \eta_2.
$$
By the support properties of the cut-off functions 
the operator $\tilde T_\lambda$ maps the domain of $Q$ into the domain of $(-\Delta_{\rel} +1)^k (-\Delta_{\rel} - \lambda^2)$.
One then computes $$(\Delta_\rel+1)^k (\Delta_\rel - \lambda^2)  \tilde T_\lambda = Q + R_1 + (\Delta_\rel +1)^k R_2(\lambda),$$ where
\begin{gather*}
 R_1 = [ (\Delta +1)^k, \chi_1 ] (\Delta_{\Omega_R} +1)^{-k}  Q\,\eta_1 +  [ (\Delta+1)^k, \chi_2 ] (\Delta_0 +1)^{-k}  Q \,\eta_2, \\
 R_2(\lambda)=  [\Delta , \chi_1 ] T_{{\Omega_R},\lambda}   \eta_1 +   [\Delta, \chi_2 ] T_{0,\lambda} \eta_2.
\end{gather*}
Therefore, one has
$$
  \tilde T_\lambda = T_\lambda  + (\Delta_\rel +1)^{-k} (\Delta_\rel - \lambda^2)^{-1}  R_1 + (\Delta_\rel - \lambda^2)^{-1}  R_2(\lambda).
$$

By elliptic regularity the resolvent kernels are smooth away from the diagonal. Hence, 
the support properties of the cutoff functions imply that $R_1$ is a smoothing operator mapping to a space of functions with support in a fixed compact set. Hence, $R_1$ is a trace-class operator. For $R_2(\lambda)$ we have
\begin{gather*}
 R_2(\lambda)=  R_3(\lambda) + R_4(\lambda),\\
  R_3(\lambda)=[\Delta , \chi_1 ] (\Delta_{\Omega_R} +1)^{-k}  (\Delta_{\Omega_R} -\lambda^2)^{-1} Q\,\eta_1,\\
  R_4(\lambda) = [\Delta, \chi_2 ] (\Delta_0 +1)^{-k}  (\Delta_0 -\lambda^2)^{-1}  Q \,\eta_2.
\end{gather*}
The operators  $[\Delta, \chi_1 ](\Delta_{\Omega_R} +1)^{-k}$ and $[\Delta, \chi_2 ] (\Delta_0 +1)^{-k}$ continuously map $L^2$ into the space $H^{2k-1}_\compp(U)$ since $2 k>d+1$, where $U$ is a bounded subset of $X$. It follows that these operators are trace-class. The operators $(\Delta_0 -\lambda^2)^{-1} Q$ and $(\Delta_{\Omega_R} -\lambda^2)^{-1} Q$ are both bounded by uniformly by $|\lambda|^2 \Im(\lambda^2)^{-1}$, which is a consequence of spectral calculus and the general inequality $| \frac{x^2}{x^2+z} | \leq \frac{|z|}{\Im(z)}$ for real $x \in \R$. We
conclude that $R_3$ and $R_4$ are trace class for any $\lambda$ in the upper half plane and that
$$\|R_3(\lambda)\|_1 \leq C_3 |\lambda|^2 \Im(\lambda^2)^{-1},\quad  \|R_4(\lambda)\|_1 \leq C_4|\lambda|^2 \Im(\lambda^2)^{-1}.$$ Thus, we obtain
$$
 \| (\Delta_\rel+1)^{-k} (\Delta_\rel - \lambda^2)^{-1}  R_1 + (\Delta_\rel - \lambda^2)^{-1}  R_2(\lambda) \|_1 \leq C  \frac{|\lambda|^2}{|\Im(\lambda^2)|^2}
$$
for some constant $C>0$. We have proved:
\begin{lemma} \label{lemma32}
 The operator $T_\lambda - \tilde T_\lambda$ is trace-class and there exists $C>0$ such that for the trace norm we have
 $$
   \| T_\lambda - \tilde T_\lambda \|_1  \leq C  \frac{|\lambda|^2}{|\mathrm{Im}(\lambda^2)|^2}.
 $$
\end{lemma}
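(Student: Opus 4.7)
My plan is to build a parametrix $\tilde T_\lambda$ by the usual gluing construction from black-box scattering (\cite{MR1451399}), patching the free resolvent on $\R^d$ with the interior resolvent on $\Omega_R$ using the cut-offs $\chi_1,\eta_1,\chi_2,\eta_2$ already introduced. The difference $T_\lambda - \tilde T_\lambda$ will then be concentrated on the overlap region where $\chi_j'$ and $\eta_j'$ are supported, and the separation $\mathrm{dist}(\supp \chi_j', \eta_j) > 0$ will force the error kernels to be smooth (by elliptic regularity applied strictly interior to the gluing zone, well away from $\partial\Omega$ and $\partial K$).

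The first concrete step is to compute $(\Delta+1)^k (\Delta-\lambda^2)\tilde T_\lambda$ via the Leibniz rule, using $\chi_j\eta_j = \eta_j$ and $\eta_1 + \eta_2 = 1$ so that the leading term reproduces $Q$, while the remainders collect into commutator terms $R_1$ (from the $(\Delta+1)^k$ acting across $\chi_j$) and $R_2(\lambda)$ (from $\Delta$ acting across $\chi_j$). Solving this identity for $\tilde T_\lambda$ and subtracting $T_\lambda$ yields
$$
\tilde T_\lambda - T_\lambda \;=\; (\Delta+1)^{-k}(\Delta-\lambda^2)^{-1} R_1 \;+\; (\Delta-\lambda^2)^{-1} R_2(\lambda).
$$

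The second step is to estimate each remainder separately. For $R_1$, the separation of supports forces the commutator kernel to be disjoint from the diagonal, and elliptic regularity of the two interior resolvents makes it smooth with range in a fixed compact set; hence $R_1$ is trace-class with trace norm uniform in $\lambda$. For $R_2(\lambda) = R_3(\lambda) + R_4(\lambda)$ I would use that $[\Delta,\chi_j](\Delta_\ast+1)^{-k}$ maps $L^2$ continuously into $H^{2k-1}_{\compp}(X)$ — trace-class since $2k-1 > d$ — and combine this with the operator-norm bound
$$
\bigl\| (\Delta_\ast - \lambda^2)^{-1} Q \bigr\| \;\le\; \frac{|\lambda|^2}{|\Im(\lambda^2)|},
$$
which follows from the functional-calculus inequality $|x^2/(x^2+z)| \le |z|/\Im(z)$ together with Lemma \ref{Tstar}(4) (so that $Q(\Delta_\ast+1)^{-1}$ is a contraction). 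The third step is to multiply by the outer resolvent $(\Delta-\lambda^2)^{-1}$, which contributes an additional $1/|\Im(\lambda^2)|$ in operator norm, giving exactly the claimed $|\lambda|^2/|\Im(\lambda^2)|^2$ bound on the trace norm.

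The main obstacle I expect is organisational rather than technical: one must choose the supports so that every commutator kernel lies in the smooth interior of the gluing zone, so that the low regularity of $\partial\Omega$ never participates in the trace-norm estimate, and one must carefully track which factor absorbs the $Q$, which absorbs the power of $\Delta+1$, and which produces the resolvent blow-up as $\Im(\lambda^2) \to 0$. A secondary point is that the Schatten assumption \ref{assumeit} is used only to legitimise the existence of a $k$ with $2k > d+1$ for which $(\Delta_{\Omega_R}+1)^{-k}$ lies in the trace ideal; once this is in hand, the estimates above follow by straightforward bookkeeping.
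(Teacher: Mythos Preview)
Your proposal is correct and follows essentially the same route as the paper: the gluing parametrix, the identity $\tilde T_\lambda - T_\lambda = (\Delta+1)^{-k}(\Delta-\lambda^2)^{-1}R_1 + (\Delta-\lambda^2)^{-1}R_2(\lambda)$, the smoothing argument for $R_1$, the $H^{2k-1}_{\compp}$ mapping property for $[\Delta,\chi_j](\Delta_\ast+1)^{-k}$, and the spectral-calculus bound on $(\Delta_\ast-\lambda^2)^{-1}Q$ all match the paper's argument step for step.
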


\begin{lemma}\label{tracecornaught}
 For any compactly supported functions $\chi_0 \in L^\infty_{\mathrm{comp}}(\R^d)$ and $\chi \in L^\infty_{\mathrm{comp}}(M)$ the operators $\chi_0  T_{0,\lambda}$ and $\chi  T_{\lambda}$ are trace-class and there exists $C>0$ such that
 $$
  \| \chi_0  T_{0,\lambda} \|_1 + \| \chi  T_{\lambda} \|_1 \leq C\frac{1 + |\lambda|^2}{|\mathrm{Im}(\lambda^2)|^2}.
 $$
\end{lemma}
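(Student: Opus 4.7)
The plan is to handle the two operators separately, in each case factoring them into a trace-class piece (coming from the compactly supported cutoff and the smoothing resolvent) and an operator-norm piece (carrying the $\lambda$-dependence). Two spectral inputs drive the estimates: $Q \in \{\delta\der,\der\delta\}$ commutes with $\Delta$ (Lemma \ref{Tstar}(3)), hence with all spectral functions of $\Delta$; and $0 \leq Q \leq \Delta$ as quadratic forms by Lemma \ref{Tstar}(4), so that $\|Q(\Delta+1)^{-1}\|_\infty \leq 1$.

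\textbf{Step 1: the operator $\chi_0 T_{0,\lambda}$.} Since $\chi_0$ is compactly supported in $\R^d$ and $2k>d$, $\chi_0(\Delta_0+1)^{-k}$ is trace class on $L^2(\R^d,\Lambda^\bullet \C^d)$. Inserting an auxiliary cutoff $\tilde\chi_0 \in C^\infty_0(\R^d)$ with $\tilde\chi_0 \equiv 1$ on $\supp \chi_0$, one writes $\chi_0(\Delta_0+1)^{-k} = \chi_0(\Delta_0+1)^{-k}\tilde\chi_0 + \chi_0(\Delta_0+1)^{-k}(1-\tilde\chi_0)$; the first summand factors through two Hilbert–Schmidt operators $[\chi_0(\Delta_0+1)^{-k/2}][(\Delta_0+1)^{-k/2}\tilde\chi_0]$ (each HS because $2(k/2)>d$ and the cutoff is compactly supported), while the second has a smooth, exponentially decaying kernel since $\supp\chi_0$ and $\supp(1-\tilde\chi_0)$ are separated. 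Next, joint spectral calculus of the commuting pair $(Q,\Delta_0)$ with $0 \leq q \leq a$ yields
$$ \bigl\|Q(\Delta_0-\lambda^2)^{-1}\bigr\|_\infty \leq \sup_{a \geq 0}\frac{a}{|a-\lambda^2|} \leq 1 + \frac{|\lambda|^2}{|\Im\lambda^2|}, $$
using the identity $a/(a-\lambda^2) = 1 + \lambda^2/(a-\lambda^2)$. The submultiplicativity $\|AB\|_1 \leq \|A\|_1\|B\|_\infty$ concludes Step 1.

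\textbf{Step 2: the operator $\chi T_\lambda$.} Here I use the gluing decomposition $T_\lambda = \tilde T_\lambda + (T_\lambda - \tilde T_\lambda)$ with $\tilde T_\lambda = \chi_1 T_{0,\lambda}\eta_1 + \chi_2 T_{\Omega_R,\lambda}\eta_2$ constructed just before Lemma \ref{lemma32}, writing $\chi T_\lambda = \chi \tilde T_\lambda + \chi(T_\lambda - \tilde T_\lambda)$. The second summand is trace class with the required bound directly by Lemma \ref{lemma32}. For $\chi \tilde T_\lambda = \chi\chi_1 T_{0,\lambda}\eta_1 + \chi\chi_2 T_{\Omega_R,\lambda}\eta_2$, the first piece is controlled by Step 1 (the compactly supported function $\chi\chi_1$ plays the role of $\chi_0$ under $M\setminus K \simeq \R^d\setminus\overline{B_R(0)}$). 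The second piece is bounded using Assumption \ref{assumeit}: $(\Delta_{\Omega_R}+1)^{-1} \in \mathcal{S}_k$ implies (by Schatten inclusion, WLOG with $k$ integer) that $(\Delta_{\Omega_R}+1)^{-k}$ is trace class on $\Omega_R$, while the spectral-calculus bound on $\|Q(\Delta_{\Omega_R}-\lambda^2)^{-1}\|_\infty$ follows verbatim as in Step 1 since $Q$ commutes with $\Delta_{\Omega_R}$ and $0 \leq Q \leq \Delta_{\Omega_R}$.

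\textbf{Main obstacle.} The trace-class property is essentially classical: for the exterior, a symbol estimate on the free resolvent gives $\chi_0(\Delta_0+1)^{-k} \in \mathcal{S}_1$; for the interior, Assumption \ref{assumeit} gives $(\Delta_{\Omega_R}+1)^{-k} \in \mathcal{S}_1$. The delicate ingredient is tracking the $\lambda$-dependence without losing powers of $|\Im\lambda^2|$; this is achieved by absorbing $Q$ into the bounded combination $Q(\Delta+1)^{-1}$ via Lemma \ref{Tstar}(4), so that all the potential blow-up near the real axis is carried by a single resolvent $(\Delta - \lambda^2)^{-1}$ of operator norm $\leq 1/|\Im\lambda^2|$, and combined with Lemma \ref{lemma32} to yield the uniform bound.
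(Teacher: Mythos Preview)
Your proof is correct and rests on the same core ingredients as the paper: the gluing parametrix $\tilde T_\lambda$, Lemma~\ref{lemma32}, and the trace-class bound on $T_{\Omega_R,\lambda}$ established in the paragraphs preceding it. The organization differs. You treat $\chi_0 T_{0,\lambda}$ first by a direct free-space argument (Hilbert--Schmidt factorization of $\chi_0(\Delta_0+1)^{-k}$ together with the operator-norm bound on $Q(\Delta_0-\lambda^2)^{-1}$), and then feed this into the gluing decomposition of $\chi T_\lambda$, keeping both terms of $\chi\tilde T_\lambda$. The paper instead enlarges $R$ so that $\supp\chi \subset K^\circ$ and chooses the cutoffs with $\chi_1 = 0$ on $\supp\chi$; then $\chi\chi_1 = 0$, so $\chi\tilde T_\lambda$ collapses to the single interior term $\chi T_{\Omega_R,\lambda}\eta_2$, and the free-space estimate for $\chi_0 T_{0,\lambda}$ is recovered afterwards as the special case $\Omega = \emptyset$. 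The paper's trick is slicker in that it avoids any separate free-space computation; your route is more explicit and self-contained. One minor slip: the Hilbert--Schmidt criterion for $\chi_0(\Delta_0+1)^{-k/2}$ is $2k>d$ (equivalently $4(k/2)>d$), not $2(k/2)>d$; this is harmless since the paper already assumes $2k>d+1$.
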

\begin{proof}
 To show this choose $R>0$ sufficiently large and cut-off functions $\chi_1,\eta_1,\chi_2,\eta_2$ as above such that
 $\chi_2$ and $\eta_2$ are equal one near the support of $\chi$. Then  $ \chi \chi_2 =  \chi$ and $ \chi \chi_1 =0$.
 Thus,
 $$
  \chi\cdot ( T_\lambda ) =   \chi \cdot ( T_\lambda - \tilde T_\lambda) + \chi T_{\Omega_R,\lambda} \eta_2.
 $$ 
 Since $T_{\Omega_R,\lambda}$ is trace-class and $\|T_{\Omega_R,\lambda} \|_1 \leq C_1 \frac{1 + |\lambda|^2}{|\mathrm{Im}(\lambda^2)|^2}$ we obtain 
 $$
  \| \chi  T_\lambda   \|_1 \leq C_2 \frac{1 + |\lambda|^2}{|\mathrm{Im}(\lambda^2)|^2}.
 $$
 The bound on $\| \chi_0 \cdot  T_{0,\lambda}   \|_1$ is immediately implied by this too since this equals 
 $\| \chi\cdot T_\lambda   \|_1$ in the special case $\Omega = \emptyset$.
\end{proof}

\begin{proposition}\label{tracecorone}
The operator $$P \left( (\Delta_{\rel} +1)^{-k} \curl \curl  (\Delta_{\rel} - \lambda^2)^{-1}   -   (\Delta_{0} +1)^{-k} \curl \curl  (\Delta_{0} - \lambda^2)^{-1} \right) P$$ is trace-class and we have the bound
$$
 \| P \left( (\Delta_{\rel} +1)^{-k} \curl \curl  (\Delta_{\rel} - \lambda^2)^{-1}   -   (\Delta_{0} +1)^{-k} \curl \curl  (\Delta_{0} - \lambda^2)^{-1} \right) P  \|_1 \leq C  \frac{1 + |\lambda|^2}{|\mathrm{Im}(\lambda^2)|^2}.
$$
\end{proposition}
\begin{proof}
We have that $P \tilde T_\lambda P = P T_{0,\lambda} P$
 and thus
 $$
  P(T_\lambda - T_{0,\lambda})P = P (T_\lambda - \tilde T_\lambda) P + P T_{0,\lambda} P.
 $$
 The bound now follows immediately from Lemma \ref{lemma32} and Lemma \ref{tracecornaught}.
 \end{proof}
 
\begin{proposition} \label{tracecor}
 For any even function $f \in \mathcal{S}(\R)$ we have that
 $$ 
  P\left(  Q \, f(\Delta_{\rel}^{1/2})  - Q\, f(\Delta_0^{1/2}) \right) P
 $$ is trace-class and the mapping
 $
  f \mapsto \Tr \left( P \left(Q \, f(\Delta_{\rel}^{1/2})   -  Q \, f(\Delta_0^{1/2}) \right) P \right)
 $
 is a tempered distribution.
\end{proposition}
\begin{proof}
Define $g \in \mathcal{S}(\R)$ by  $g (\lambda) = (1 + \lambda^2)^k f(\lambda)$.
Let $\tilde g$ be an almost analytic extension of $g$ such that $\frac{\partial \tilde g}{\partial \overline z} = O(|\mathrm{Im}(z)|^m)$ for some fixed $m \geq 5$.
Such an almost analytic extension can always be constructed as
$$
 \tilde g(x + \rmi y) = \sum_{k=0}^m \frac{1}{k!} g^{(k)}(x) (\rmi y)^k \chi(y),
$$
where $\chi \in C^\infty_\comp(\R)$ is chosen such that it equals one near $0$.
By the Helffer-Sj\"ostrand formula we have
 $$
  f(\Delta_{\rel}^{1/2}) Q = \frac{2}{\pi} \int\limits_{\mathrm{Im}(z)>0}z \frac{\partial \tilde g}{\partial \overline z} T_z \der m(z),
 $$
 and the analogous formula holds for $f(\Delta_0^{1/2})Q $. Here $\der m$ denotes the Lebesgue measure on $\C$. Hence,
 \begin{gather*}
   P\left(Q\, f(\Delta_{\rel}^{1/2})  - Q\, f(\Delta_0^{1/2})  \right) P \\= \frac{2}{\pi} \int\limits_{\mathrm{Im}(z)>0}\frac{\partial \tilde g}{\partial \overline z} P \left((\Delta_{\rel} +1)^{-k} Q\,  (\Delta_{\rel} - z^2)^{-1}  -  (\Delta_{0} +1)^{-k}  Q \, (\Delta_{0} - z^2)^{-1}  \right) P z \der m(z),
 \end{gather*}
 which implies the statement as the trace norm is finite and can be estimated as
 $$
  \Tr \left( | P Q\,  f(\Delta_{\rel}^{1/2})  -  Q\,  f(\Delta_0^{1/2}) P | \right)  \leq C \frac{2}{\pi} \int\limits_{\mathrm{Im}(z)>0} | \frac{\partial \tilde g}{\partial \overline z}|  \frac{1+| z |^2}{|\mathrm{Im}(z^2)|^2} \der m(z).
 $$
 \end{proof}
The same proof applied to Lemma \ref{tracecornaught} also gives
\begin{proposition} \label{tracenew}
 For any even function $f \in \mathcal{S}(\R)$ we have that
 $$ 
  (1-P) \left(  Q \, f(\Delta^{1/2}_{\rel}) \right)
 $$ is trace-class and the mapping
 $
  f \mapsto \Tr \left( (1-P) \left(Q \, f(\Delta^{1/2}_{\rel}) \right) \right)
 $
 is a tempered distribution.
\end{proposition} 
 
 We will now conclude with the proof of Theorem \ref{main1}, which will be a modification of the proof for $p$-forms in \cite{OS}, Theorem 6.1,  by carefully inserting suitable projection operators in the right places without destroying crucial positivity properties. The approach relies essentially on the Maass-Selberg technique, which requires a subtle analysis near $\lambda=0$. We provide here more details than in 
 \cite{OS} as we believe this result cannot be obtained from the abstract Birman-Krein formula.
\begin{proof}[Proof of Theorem \ref{main1}] It remains to show the trace-formula.
Since both sides are distributions in $\mathcal{D}'(\R)$ it suffices to check the trace formula for a dense class of functions. 
We will thus assume here that $f$ is real analytic in some neighborhood of zero, depending on $f$. We fix $0 \leq p \leq d$.
By Prop. \ref{kernelconv}, the operators $f(\Delta_{\rel}^{1/2})$ and $f(\Delta^{1/2}_0)$ have smooth integral kernels $k(x,y)$ and $k_0(x,y)$ respectively. We denote the integral kernels of $Q f(\Delta_{\rel}^{1/2})$ and $Q f(\Delta_0^{1/2})$
by $q(x,y)$ and $q_0(x,y)$.
Convergence in \ref{kernelconv} is in the space $C^\infty(M^\circ \times M^\circ; \Lambda^p T^*M \boxtimes (\Lambda^p T^*M)^*)$. In case $Q=\der \delta$ the kernel $q$ is the integral kernel of the map 
$\der f(\Delta_{\rel}^{1/2}) \delta : C^\infty_0(M^\circ,\Lambda^p T^*M) \to C^\infty(M^\circ,\Lambda^p T^*M)$, as $\overline \der$ and $\overline \der^*$ commute with $f(\Delta_{\rel}^{\frac{1}{2}})$. Similarly, if $Q=\delta \der$ the kernel $q$ is the integral kernel of $\delta f(\Delta^{1/2}) \der$ restricted to the space of $p$-forms.
By Prop. \ref{difftheo} and Prop. \ref{kernelconv} we therefore obtain
\begin{gather}
 q(x,y) =  \frac{1}{2\pi}  \sum\limits_{\nu}  \int_{0}^\infty \lambda^2 f(\lambda) E_{\lambda}(a_Q \Phi_\nu)(x) \otimes E_{\lambda}(a_Q  \Phi_\nu)^*(y)\,\der \lambda,
\end{gather}
where again the sum converges in $C^\infty(M^\circ \times M^\circ; \Lambda^p T^*M \boxtimes (\Lambda^p T^*M)^*)$. Here $a_Q$ is the operator of exterior multiplication by $\der r$ if $Q= \der \delta$, and it is the operator of interior multiplication by $\der r$ if $Q=\delta \der$. In particular, $P_Q=a_Q^* a_Q$ is the orthogonal projection onto $L^2_t(\sphere,\Lambda^\bullet \C^d)$ if $Q=\der \delta $ and it is the orthogonal projection onto $L^2_n(\sphere,\Lambda^\bullet \C^d)$ if $Q=\delta \der $. 

We define the family $(q_\nu)_\nu$ of smooth kernels $q_\nu \in C^\infty(M^\circ \times M^\circ; \Lambda^p T^*M \boxtimes (\Lambda^p T^*M)^*)$ by
\begin{gather*}
 q_\nu(x,y) =  \frac{1}{2\pi}   \int_{0}^\infty \lambda^2 f(\lambda) E_{\lambda}(a_Q \Phi_\nu)(x) \otimes E_{\lambda}(a_Q \Phi_\nu)^*(y)\,\der\lambda. 
\end{gather*}
In the same way we construct $q_{0,\nu} \in C^\infty(\R^d \times \R^d; \Lambda^p T^*\R^d \boxtimes (\Lambda^p T^* \R^d)^*)$ for $Q f(\Delta^{1/2}_0)$. The operator
  $(1-P) Q f(\Delta_{\rel}^{1/2}) (1-P)$ is trace-class by Proposition \ref{tracenew} and has smooth kernel on $\Omega_R$. 
  Since the trace of a trace-class operator can be computed as a limit of traces on any increasing sequence of subspaces 
  with union $L^2(\Omega_R)$ the trace equals  
  \begin{gather*}
    \Tr \left( (1-P) Q f(\Delta_{\rel}^{1/2}) (1-P) \right) = \lim_{n \to \infty} \int_{\Omega_{R,n}} \tr\, q(x,x) \der x,
  \end{gather*}
where $\Omega_{R,n}$ is an increasing compact exhaustion of $\Omega_{R}$ and $\tr$ denotes the pointwise trace on the fibre $\mathrm{End}(\Lambda^p T^*_x M)$ of $\Lambda^p T^*M \boxtimes (\Lambda^p T^*M)^*$ at the point $(x,x)$.
Note that for positive $f$ the kernels $\tr\; q(x,x)$ and $\tr\; q_\nu(x,x)$ are positive. Since $(1-P) Q f(\Delta_0^{1/2}) (1-P)$ is trace-class it follows that $\tr\; q(x,x)$ and $\tr\; q_\nu(x,x)$ are integrable on $M_{\rho}$. This also implies that
  $\tr\; q(x,x)$ and $\tr\; q_\nu(x,x)$ are integrable on $M_{\rho}$ for general $f$, since any Schwartz function can be dominated by a positive Schwartz function.
  We can therefore write
   \begin{gather*}
    \Tr \left( (1-P) Q f(\Delta_{\rel}^{1/2}) (1-P) \right) = \int_{\Omega_{R}} \tr\, q(x,x) \der x=\int_{\Omega_{R}}\sum_\nu  \tr\, q_\nu(x,x) \der x,
  \end{gather*}

  Similarly, the operator $(1-P_0) Q f(\Delta_0^{1/2}) (1-P_0)$ has smooth kernel on $\overline{B_R(0)}$ and is therefore trace-class with
 \begin{gather*}
  \Tr \left( (1-P_0) Q f(\Delta_0^{1/2}) (1-P_0) \right) = \int_{B_R}  \tr \, q_{0}(x,x) \der x= \int_{B_R} \sum_\nu   \tr \, q_{0,\nu}(x,x) \der x.
 \end{gather*}
  Now let $\chi_{\rho}$ be the indicator function of a large ball $B_\rho$ such that $\rho> R$. Then,
 $$
  \Tr \left( \chi_{\rho} \left (  P Q f(\Delta^{1/2}_{\rel}) P - P_0 Q f(\Delta_0^{1/2}) P_0  \right)\chi_{\rho} \right) = \int_{B_\rho \setminus B_R}  \tr\, (q(x,x)  - q_0(x,x)) \der x.
 $$ 
 By Proposition \ref{tracecor} the operator  $PQf(\Delta^{1/2})P - P_0Qf(\Delta_0^{1/2}) P_0$ is trace-class 
 and we can again compute its trace on an increasing sequence of subspaces exhausting the Hilbert space. This gives
  $$
  \Tr \left( P Q f(\Delta^{1/2}_{\rel})P - P_0 Q f(\Delta_0^{1/2}) P_0 \right) = \lim_{\rho \to \infty}  \int_{B_\rho \setminus B_R}  \tr\, (q(x,x)  - q_0(x,x)) \der x.
 $$
   
Collecting everything we have
 \begin{gather} \label{coolequ}
   \Tr \left( (1-P) Q f(\Delta^{1/2}_{\rel}) (1-P) \right) - \Tr \left( (1-P_0) Q f(\Delta_0^{1/2}) (1-P_0) \right) \\+ \Tr \left( P Q f(\Delta^{1/2}_{\rel})P - P_0 Q f(\Delta_0^{1/2}) P_0 \right)=  \nonumber
     \lim_{\rho \to \infty}  \sum_{\nu}\left(  \int_{M_{\rho}}  \tr\; q_\nu(x,x)  \der x -  \int_{B_\rho}  \tr\;  q_{0,\nu}(x,x)  \der x \right),
 \end{gather}
 where $M_\rho$ is obtained from $M$ by removing the subset identified with $\R^d \setminus B_\rho$. 
  It is common to use the following (Mass-Selberg-) trick to compute these integrals. Since $(\Delta_\rel - \lambda^2) E_\lambda(\Phi) =0$, differentiation in $\lambda$ yields $(\Delta_\rel - \lambda^2) E^{\;\prime}_\lambda(\Phi)  = 2 \lambda E_\lambda(\Phi)$, where $E^{\;\prime}_\lambda(\Phi) = \frac{d}{d\lambda} E_\lambda(\Phi)$. Note that $E_\lambda$ is differentiable for $\lambda>0$
  since it is analytic in $\lambda$ on $(0,\infty)$.
  Hence, integration by parts, Prop. \ref{ibpprop}, gives for a positive Schwartz function $f$ the equality
  \begin{gather*}
  \int_{M_\rho} \tr \;q_\nu(x,x) \der x= \lim_{\epsilon \to 0_+}\frac{1}{2 \pi} \int_{M_\rho}  \int_{\epsilon}^\infty \lambda^2 f(\lambda)  \langle E_\lambda(a_Q \Phi_\nu) ,E_\lambda(a_Q \Phi_\nu) \rangle \der \lambda \der x \\ =  \lim_{\epsilon \to 0_+}\frac{1}{4 \pi }  \int_{M_\rho} \int_{\epsilon}^\infty \lambda f(\lambda)  \langle  (\Delta_\rel - \lambda^2)  E^{\;\prime}_\lambda(a_Q \Phi_\nu),E_\lambda(a_Q \Phi_\nu) \rangle  \der \lambda \der x \\= \lim_{\epsilon \to 0_+}
   \frac{1}{4 \pi}  \int_{\epsilon}^\infty  \lambda f(\lambda)  b_\rho(E^{\;\prime}_\lambda(a_Q \Phi_\nu),  E_\lambda(a_Q \Phi_\nu) ) \der \lambda.
 \end{gather*}
 Here interchanging the order of integration is justified by Fubini's theorem. Similarly, interchanging limit and integration
 commute by Fatou's lemma.  
 Here $b_\rho(F,G)$ is the boundary pairing of forms $F$ and $G$ and defined by
 $$
  b_\rho(F,G) = \int_{\partial M_\rho} \langle F(x) ,\nabla_n G(x) \rangle  - \langle \nabla_n F(x),G(x) \rangle \der \sigma(x),
 $$
 where $\der \sigma$ is the surface measure of $\partial M_\rho$. Integration by parts is justified by Proposition \ref{ibpprop}. Indeed, differentiating Equation \eqref{Econstr} at $\lambda>0$ implies that for any smooth compactly supported cutoff function $\chi$ that equals one near $\partial \Omega$ the form $\chi E^{\;\prime}_\lambda(\Phi)$ is in the domain of $\Delta_{\rel}$.
 As before, by linearity, this implies that the equality holds without the assumption of positivity.

 We conclude that
 \begin{gather*}
 \int_{M_\rho} 
  \tr \;q_\nu(x,x)  \der x -\int_{B_\rho} 
  \tr \;q_{0,\nu}(x,x)\der x =  \lim_{\epsilon \to 0_+} \frac{1}{4 \pi} \int_{\epsilon}^\infty \lambda f(\lambda) \eta_{\nu,\rho}(\lambda) \der \lambda, \\
  \eta_{\nu,\rho}(\lambda) = b_\rho\left(\frac{d}{d \lambda}\left( \tilde j_\lambda(a_Q \Phi_\nu) + \tilde h^{(1)}_\lambda(A_\lambda a_Q \Phi_\nu) \right), \tilde j_{\overline{\lambda}}(a_Q \Phi_\nu) + \tilde h^{(1)}_{\overline{\lambda}}(A_{\overline{\lambda}} a_Q \Phi_\nu) \right) \\-  b_\rho\left( \frac{d}{d \lambda}\left( \tilde j_\lambda( a_Q\Phi_\nu) \right) , \tilde j_{\overline{\lambda}}(a_Q \Phi_\nu)\right)=
   b_\rho\left(\frac{d}{d \lambda}\left( \tilde j_\lambda(a_Q \Phi_\nu) + \tilde h^{(1)}_\lambda(A_{\lambda} a_Q \Phi_\nu) \right), \tilde h^{(1)}_{\overline{\lambda}}(A_{\overline{\lambda}} a_Q \Phi_\nu) \right)\\
   +b_\rho\left(\frac{d}{d \lambda}\left(\tilde h^{(1)}_\lambda(A_\lambda a_Q \Phi_\nu) \right), \tilde j_{\overline{\lambda}}(a_Q \Phi_\nu)  \right).
 \end{gather*}
 We have $\frac{d}{d\lambda} \left(\tilde h^{(1)}_\lambda(A_\lambda  \Phi) \right) =  \tilde h^{(1)}_\lambda(A^{\prime}_\lambda \Phi) +\tilde h^{(1)\prime}_\lambda(A_\lambda \Phi)$.
 Unitarity of $S(\lambda)$ implies the identity $A(\lambda) + A^*(\overline{\lambda}) + A^*(\overline{\lambda}) A(\lambda) =0$, and therefore
 \begin{gather*}
   b_\rho\left(\tilde h^{(1)\prime}_\lambda(A_\lambda a_Q \Phi_\nu),  \tilde h^{(1)}_{\overline\lambda}(A_{\overline{\lambda}} a_Q \Phi_\nu)\right) + b_\rho\left(\tilde h^{(1)\prime}_\lambda( q_Q \Phi_\nu),  \tilde h^{(1)}_{\overline{\lambda}}(A_{\overline{\lambda}} a_Q  \Phi_\nu)\right)  \\+b_\rho\left(\tilde h^{(1)\prime}_\lambda(A_\lambda a_Q \Phi_\nu),  \tilde h^{(1)}_{\overline{\lambda}}(a_Q \Phi_\nu)\right) =0.
 \end{gather*}
 By Prop. \ref{difftheo} the operators $A_\lambda$ and $A_\lambda'$ commute with $a_Q$. Note however that in
 $A_\lambda a_Q \Phi = a_Q A_\lambda \Phi$ the scattering matrices on the left and right hand side act on forms of different degree.
 Using $b_{\rho}( \tilde h^{(1)}_\lambda(\Phi_\nu), \tilde h^{(2)}_{\overline{\lambda}}(\Phi_\nu))=0$ and $\tilde j_\lambda(\Phi_\nu)=  \tilde h^{(1)}_\lambda(\Phi_\nu)+ \tilde h^{(2)}_\lambda(\Phi_\nu)$ and the fact that integration over the sphere results in only diagonal terms with respect to the basis $(\Phi_\nu)$
  one obtains
  \begin{gather} \label{lineone}
  \eta_{\nu,\rho}(\lambda)=\left( \langle A_\lambda^{\prime} P_Q \Phi_\nu, A_{\overline{\lambda}} \Phi_\nu \rangle + \langle A_\lambda^{\prime} P_Q \Phi_\nu,\Phi_\nu\rangle \right) b_{\rho}( \tilde h^{(1)}_\lambda(\Phi_\nu), \tilde h^{(1)}_{\overline{\lambda}}(\Phi_\nu))\\+
  \langle  P_Q \Phi_\nu, A_{\overline{\lambda}} \Phi_\nu\rangle b_{\rho}( \tilde h^{(2) \prime}_\lambda(\Phi_\nu), \tilde h^{(1)}_{\overline{\lambda}}(\Phi_\nu))+
  \langle  A_\lambda P_Q \Phi_\nu, \Phi_\nu\rangle b_{\rho}(\tilde h^{(1) \prime}_\lambda(\Phi_\nu), \tilde h^{(2)}_{\overline{\lambda}}(\Phi_\nu)). \nonumber
   \end{gather}
 The term $b_{\rho}( \tilde h^{(1)}_\lambda(\Phi_\nu), \tilde h^{(1)}_{\overline{\lambda}}(\Phi_\nu))$ is independent of $\rho$ and is actually given in terms of a Wronskian between Hankel functions. One obtains
 $$
 b_{\rho}( \tilde h^{(1)}_\lambda(\Phi_\nu), \tilde h^{(1)}_{\overline{\lambda}}(\Phi_\nu)) = -2 \rmi \lambda.
 $$
 This first summand in \eqref{lineone} therefore equals
 $-2 \rmi \lambda \langle S_\lambda^* S_\lambda'  P_Q \Phi_\nu, \Phi_\nu \rangle$. In case $Q=\delta \der$ we need to bear in mind that
 $\Phi_\nu$ has form degree $p+1$, whereas in case $Q=\der \delta$ it has form degree $p-1$.
We define $S_Q(\lambda):= S_{t,\lambda}$ if $Q=\delta \der$, and $S_Q(\lambda):= S_{n,\lambda}$ in case $Q=\der \delta$. Then
$$
  \sum_\nu \langle S_\lambda^* S_\lambda'  P_Q \Phi_\nu, \Phi_\nu \rangle = \Tr_{L^2(\sphere,\Lambda^{p \pm 1} \C^d)}(S_\lambda^* S_\lambda'  P_Q ) = \Tr_{L^2(\sphere,\Lambda^p \C^d)} \left ( S_Q^*(\lambda) S_Q^{\;\prime}(\lambda) \right).
$$
\begin{lemma}\label{odd}
The terms 
 $$g(\lambda) := \langle  A_\lambda P_{Q} \Phi_\nu, \Phi_\nu\rangle b_{\rho}(\tilde h^{(1) \prime}_\lambda(\Phi_\nu), \tilde h^{(2)}_{\overline \lambda}(\Phi_\nu))$$ and $$ \langle P_Q\Phi_\nu, A_{\overline \lambda} \Phi_\nu\rangle b_{\rho}(\tilde h^{(2)}_\lambda(\Phi_\nu), \tilde h^{(1) \prime}_{\overline \lambda}(\Phi_\nu))$$ are complex conjugates of each other for positive $\lambda$. As long as $\lambda>0$ their sum is 
 $$
 2 \Re\left(\langle  A_\lambda P_Q \Phi_\nu, \Phi_\nu\rangle b_{\rho}(\tilde h^{(1)}_\lambda(\Phi_\nu), \tilde h^{(2) \prime}_{\overline \lambda}(\Phi_\nu))\right) = 2 \Re(g(\lambda)).$$
 Moreover $\Re g(\lambda)$ is odd in the sense that $\Re g(-\lambda) = -\Re g(\lambda)$ for $\lambda>0$.
\end{lemma}
\begin{proof} 
First note that  $\tau\Phi_{\nu}=(-1)^{\ell_{\nu}}\Phi_{\nu}$. Then and Definition \eqref{hankelsums}, along with the equations \eqref{rotationhankel1} and \eqref{rotationhankel2}  also imply that
\begin{align*}
&\overline{\tilde{h}_{\lambda}^{(1)}(\Phi_{\nu})}=(-1)^{\ell_{\nu}}\tilde{h}_{\overline{\lambda}}^{(2)}(\Phi_{\nu}) \quad \overline{\tilde{h}_{\lambda}^{(2)}(\Phi_{\nu})}=(-1)^{\ell_{\nu}}\tilde{h}_{\overline{\lambda}}^{(1)}(\Phi_{\nu})
\\& \tilde h^{(1)}_{-\lambda}(\Phi_{\nu})=\rmi^{d-1} (-1)^{\ell_\nu+d+1} \tilde h^{(2)}_\lambda(\Phi_{\nu}).
\end{align*}
Using the functional equation for $A_\lambda$ given by Proposition \ref{Aone} (vi), and combining the equations mentioned above, we can show that the function
 $g$
 satisfies $\overline{g(-\lambda)} = - g(\lambda)$. Thus, $\Re(g(\lambda))$ is odd. 
 \end{proof} 
Using the Lemma \ref{odd} one can then change the domain of integration 
 \begin{gather*}
    \lim_{\epsilon \to 0_+} \frac{1}{4 \pi} \int_{\epsilon}^\infty \lambda f(\lambda) 2 \Re\left( \langle P_QA_\lambda  \Phi_\nu, \Phi_\nu\rangle b_{\rho}(\tilde h^{(1) \prime}_\lambda(\Phi_\nu), \tilde h^{(2)}_{\overline \lambda}(\Phi_\nu))\right)  \der \lambda \\
   =\Re \lim_{\epsilon \to 0_+} \frac{1}{4 \pi} \int_{\R_\epsilon}\lambda f(\lambda) \langle P_QA_\lambda  \Phi_\nu, \Phi_\nu\rangle b_{\rho}(\tilde h^{(1) \prime}_\lambda(\Phi_\nu), \tilde h^{(2)}_{\overline \lambda}(\Phi_\nu))  \der \lambda.
   \end{gather*} 
   
 Summarising, we have
 \begin{align}\label{tracequail} 
 & \Tr \left( (1-P) Q f(\Delta^{1/2}_{\rel}) (1-P) \right) - \Tr \left( (1-P_0) Q f(\Delta_0^{1/2}) (1-P_0) \right)\nonumber \\&+ \Tr \left( P Q f(\Delta^{1/2}_{\rel})P - P_0 Q f(\Delta_0^{1/2}) P_0 \right)  \\  &= \frac{1}{2 \pi \rmi }\int_{0}^\infty\lambda^2  f(\lambda) \Tr_{L^2(\sphere,\Lambda^p \C^d)} \left ( S_Q^*(\lambda) S_Q^{\;\prime}(\lambda) \right) \der \lambda \nonumber \\&+  \lim_{\rho \to \infty} \sum_\nu \Re \lim_{\epsilon \to 0_+}\frac{1}{4 \pi} \int_{\R_\epsilon}\lambda f(\lambda) \langle P_QA_\lambda  \Phi_\nu, \Phi_\nu\rangle b_{\rho}(\tilde h^{(1) \prime}_\lambda(\Phi_\nu), \tilde h^{(2)}_{\overline \lambda}(\Phi_\nu))  \der \lambda. \nonumber
 \end{align}
It now remains to only show that the second term of the right hand side of the equation vanishes. 
Note that the function $b_{\rho}(\tilde h^{(1) \prime}_\lambda(\Phi_\nu), \tilde h^{(2)}_\lambda(\Phi_\nu))$ depends only on $\ell_\nu$ and $\lambda \rho$.
We can therefore define $H_\ell$ by  $H_\ell(\lambda \rho)= b_{\rho}(\tilde h^{(1) \prime}_\lambda(\Phi_\nu), \tilde h^{(2)}_\lambda(\Phi_\nu))$. 
For the sake of completeness, we recall Lemma 6.2 in \cite{OS}: 
\begin{lemma}[Lemma 6.2 in \cite{OS}] \label{superhankel}
 Let as before $H_\ell(\lambda \rho):= b_{\rho}(\tilde h^{(1) \prime}_\lambda(\Phi_\nu), \tilde h^{(2)}_\lambda(\Phi_\nu))$. 
 Suppose that $\tilde{f} \in C^\infty_0(\R)$ is supported in $(-T,T)$ and extends holomorphically near zero to a function analytic in a neighborhood of the closed ball $\overline{B_\delta(0)}$.
 Let $\mathrm{Rec}:=[-T,T] \times [0, \delta_1] \subset \C$ be any rectangle with $\delta_1>0$.
 Then for every $k \in \mathbb{N}$ there exists a constant $C_k>0$, independent of $\nu$ such that for any $\rho>\delta^{-1}$ and any $g$ that is holomorphic in the interior of $\mathrm{Rec}$ and continuous on $\mathrm{Rec}$ we have the following estimates for $\rho>1$;
   \begin{itemize}
  \item if $d=2$ and $\ell_\nu=0$  then 
  $$
 | \lim_{\epsilon \to 0_+} \int_{\R_\epsilon} \frac{1}{\lambda} \tilde{f}(\lambda) g(\lambda) H_\ell(\lambda \rho) \der \lambda - (-2 \, \rmi \,g(0))  \tilde{f}(0) | \leq  \frac{C_k}{\rho^k} \sup\limits_{x \in \mathrm{Rec}} |g(x)| $$
  \item if $d=2$ and $\ell_\nu=1$ and $g(\lambda) = \frac{a}{-\log \lambda}+ o(\frac{1}{-\log \lambda})$ for $|\lambda|<1/2$ then 
  $$
  | \lim_{\epsilon \to 0_+} \int_{\R_\epsilon} \frac{1}{\lambda} \tilde{f}(\lambda) g(\lambda) H_\ell(\lambda \rho) \der \lambda -(4 \, \rmi \, a) \tilde{f}(0) | \leq \frac{C_k}{\rho^k} \sup\limits_{x \in \mathrm{Rec}} |g(x)|  $$
  \item  if $d=3$ and $\ell_\nu=0$ then 
   $$
  | \lim_{\epsilon \to 0_+} \int_{\R_\epsilon}\frac{1}{\lambda} \tilde{f}(\lambda) g(\lambda) H_\ell(\lambda \rho) \der \lambda - (-\pi g(0)) \tilde{f}(0)| \leq\frac{C_k}{\rho^k} \sup\limits_{x \in \mathrm{Rec}} |g(x)|  .
 $$
 \item if $2\ell +(d-4)>0$ and $g(\lambda) = a \lambda^{2 \ell +  d - 4}  +o(\lambda^{2 \ell +  d - 4}) $  for $|\lambda|<1$ then
  \begin{gather*}
|\lim_{\epsilon \to 0_+} \int_{\R_\epsilon} \frac{1}{\lambda} \tilde{f}(\lambda) g(\lambda) H_\ell(\lambda \rho) \der \lambda - a\tilde{f}(0) \gamma_{d,\ell} \rho^{-2\ell- d+4}| \\ \leq \frac{C_k (1+\ell)^2}{\rho^k} \sup\limits_{x \in \mathrm{Rec}} |g(x)|  e^{2(1+\frac{d}{2})^2 \rho^{-1} \delta^{-1}  (1+\ell)^2},
 \end{gather*}
 where $\gamma_{d,\ell} =  \rmi\; 2^{2\ell+d-3} \Gamma(\ell +\frac{d-2}{2}) \Gamma(\ell +\frac{d-5}{2})$.  
 \end{itemize}
\end{lemma}
We apply this Lemma with $\tilde{f}(\lambda)=\lambda^2 f(\lambda)$ and  $\langle  A_\lambda P_Q \Phi_\nu, \Phi_\nu\rangle=g_\nu(\lambda)$. By Lemma \ref{distrellemmar} the form $P_Q \Phi_\nu$ decomposes into a linear combination
of spherical harmonics of degrees $\ell_\mu -2, \ell_\mu$ and $\ell_{\mu}+2$. By \ref{Aone} (iv) these functions satisfy the required bounds, and we obtain:
 $$
  |\lim_{\epsilon \to 0_+} \frac{1}{4 \pi} \int_{\epsilon}^\infty \lambda f(\lambda) \langle A_\lambda P_Q \Phi_\nu, \Phi_\nu\rangle b_{\rho}(\tilde h^{(1) \prime}_\lambda(\Phi_\nu), \tilde h^{(2)}_{\overline{\lambda}}(\Phi_\nu))  \der \lambda | \leq C_1 \rho^{-1} \sup\limits_{x \in \mathrm{Rec}} |g_\nu (x)|.
 $$
 Here we use that $\tilde f(0)=0$.  By Proposition \ref{Aone} (v) 
 we have the bound
 $$
  \sup\limits_{x \in \mathrm{Rec}} |g_\nu(x)|  \leq C_2 \left(R_1^{\ell_\mu+\ell_\nu+2} \frac{\lambda^{\ell_\nu + \ell_\mu + d-4}}{\Gamma(\ell_\nu +\frac{d}{2}) \Gamma(\ell_\mu +\frac{d-2}{2})}\right).
 $$
 The decay of the right hand side then gives absolute convergence of the sum and the bound
 $$
   \sum_\nu \lim_{\epsilon \to 0_+} \frac{1}{4 \pi} \int_{\epsilon}^\infty \lambda f(\lambda) \langle A_\lambda P_Q \Phi_\nu, \Phi_\nu\rangle b_{\rho}(\tilde h^{(1) \prime}_\lambda(\Phi_\nu), \tilde h^{(2)}_{\overline{\lambda}}(\Phi_\nu))  \der \lambda \leq \frac{C_3}{\rho}.
 $$
 Since this converges to zero as $\rho \to \infty$ the second term on the right hand side of Equ. \eqref{tracequail} vanishes as claimed. The proof is complete.
\end{proof}

\begin{proof}[Proof of Theorem \ref{B2}]

We have already established in Proposition \ref{tracecorone} that the operator
$$
  T_f = \curl\,\curl f(\Delta_{\rel,\R^3}^{1/2}) -  \curl\,\curl f(\Delta_0^{1/2})
$$
is trace-class. Let $p_R$ be the projection onto $L^2(B_R \setminus \overline{\Omega})$. 
The operator 
$$
 (1-p_R) \curl\,\curl f(\Delta_{\rel,\R^3}^{1/2}) (1-p_R)
$$
is trace-class and its trace equals
$$
 \sum_{j=1}^\infty \mu_j^2 f(\mu_j).
$$
Then, by Theorem \ref{main1}
we have
$$
 p_R \curl\,\curl f(\Delta_{\rel,\R^3}^{1/2}) p_R - \curl\,\curl f(\Delta_0^{1/2})
$$
is trace-class and its trace equals 
$$
\frac{1}{2 \pi \rmi }\int_{0}^\infty\lambda^2  f(\lambda) \Tr_{L^2(\mathbb{S}^2,\C^3)} \left ( S_t^*(\lambda) S_t^{\;\prime}(\lambda) \right) \der \lambda.
$$
Simply adding these two terms gives the theorem.
\end{proof}

%

\appendix

\section{The relative Laplace operator} \label{relap}

As explained in the introduction the operator $\Delta_{\rel}$ is defined on a general manifold $Z$ with Hermitian inner product on $\Lambda^\bullet T^*Z$ as 
$$
 \Delta_{\rel} = \overline{\der}\,\overline{\der}^* + \overline{\der}^*\overline{\der},
$$
with its natural domain 
$$
 \mathrm{dom}(D^2)=\{ f \in \mathrm{dom}(\overline{\der}) \cap \mathrm{dom}(\overline{\der}^*) \,\mid \,  \overline{\der} f \in  \mathrm{dom}(\overline{\der}^*),  \overline{\der}^* f \in  \mathrm{dom}(\overline{\der}) \}.
$$
The fact that this operator is self-adjoint is a consequence of a more general abstract statement about closed operators. 
Self-adjointness of Laplace-operators defined in a similar way was shown by Gaffney (\cite{MR68888}). As pointed out by Kohn (\cite{MR153030}) his method also applies to the relative Laplacian and has been used in the literature in various forms to prove self-adjointness (for example \cite{MR0461588,MR929141}).
An abstract statement, using Gaffney's proof, can be found in \cite{MR2463962}.  Similarly, such constructions also appear in \cite{lesch} in the context of Hilbert complexes.

For the convenience of the reader we give here a short and direct proof of self-adjointness of $\Delta_{\rel}$ with a slightly more refined conclusion. The Lemma below is formulated for generic operators $T$. 
\begin{lemma}\label{Tstar}
 Suppose that $T_0$ is a densely defined operator in a Hilbert space $H$ with densely defined adjoint $T_0^*$. Let
 $T = \overline{T} = T_0^{**}$ be its closure and $T^* = T_0^*$ its adjoint. Assume that $\mathrm{rg}(T_0) \subset \ker(T_0)$
 and that $\dom(T) \cap \dom(T^*)$ is dense.
 Then the following statements hold
 \begin{enumerate}
 \item the operator $T T^* + T^* T$ is densely defined and self-adjoint.
 \item The closure of the quadratic form 
 $q(f,f) = \langle T f, Tf \rangle + \langle T^* f, T^* f \rangle$ 
 defined on $\dom(T_0) \cap \dom(T^*)$ has associated self-adjoint operator $T^* T + T T^*$. In other words 
 $T^* T + T T^*$ is the Friedrichs extension of the symmetric operator $T_0 T^* + T^* T_0$.
  \item 
 The operators $T^* T$ and $T^* T$ form a commuting pair of self-adjoint operators in the sense that there exists a joint spectral resolution.
 \item the operator $T+ T^*$ is self-adjoint, its square equals $T^* T + T T^*$ and 
 $$ \| T T^* (T T^* + T^* T + 1)^{-1} \| \leq 1, \quad \| T^* T (T T^* + T^* T + 1)^{-1}\| \leq 1.$$
 \end{enumerate}
\end{lemma}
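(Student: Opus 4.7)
The plan is to derive all four claims from the abstract Hodge-type decomposition that the hypothesis forces on $H$. Two preliminary nilpotency facts are the starting point: $T^2 = 0$ on $\dom(T^2)$ is immediate since $T_0 f\in\ker(T_0)\subset\ker(T)$ for $f\in\dom(T_0)$ and $\ker(T)$ is closed, while $(T^*)^2 = 0$ follows by duality, because $\langle(T^*)^2 g,h\rangle = \langle g,T^2 h\rangle = 0$ for $g\in\dom((T^*)^2)$ and $h$ in the dense set $\dom(T^2)\supset\dom(T_0)$. Combined with the standard identities $\ker(T)^\perp=\overline{\mathrm{rg}(T^*)}$ and $\ker(T^*)^\perp=\overline{\mathrm{rg}(T)}$, these produce the orthogonal decomposition
\[
H = \mathcal{H}\oplus\overline{\mathrm{rg}(T)}\oplus\overline{\mathrm{rg}(T^*)},\qquad \mathcal{H}:=\ker(T)\cap\ker(T^*).
\]

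The decisive structural step is that the three orthogonal projections preserve $\dom(T)$ and $\dom(T^*)$: since both $\mathcal{H}$ and $\overline{\mathrm{rg}(T)}$ lie in $\ker(T)\subset\dom(T)$, for any $f\in\dom(T)$ the components $P_\mathcal{H} f$ and $P_2 f$ (projections onto $\mathcal{H}$ and $\overline{\mathrm{rg}(T)}$) automatically belong to $\dom(T)$, hence so does $P_1 f = f - P_\mathcal{H} f - P_2 f \in \overline{\mathrm{rg}(T^*)}\cap\dom(T)$; the analogous statement for $\dom(T^*)$ is symmetric. Setting $A := T|_{\overline{\mathrm{rg}(T^*)}\cap\dom(T)}$, one checks that $A$ is a closed densely-defined operator $\overline{\mathrm{rg}(T^*)}\to\overline{\mathrm{rg}(T)}$ whose Hilbert-space adjoint is the corresponding restriction of $T^*$. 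Consequently $\dom(T)\cap\dom(T^*) = \mathcal{H}\oplus\dom(A)\oplus\dom(A^*)$, and on this domain $D=T+T^*$ acts as the direct sum $0_\mathcal{H}\oplus\bigl(\begin{smallmatrix}0 & A^*\\ A & 0\end{smallmatrix}\bigr)$, self-adjoint by the standard result for off-diagonal matrix operators; this gives (4). Squaring annihilates the $T^2$ and $(T^*)^2$ cross terms, leaving $D^2 = T^*T+TT^* = 0_\mathcal{H}\oplus A^*A\oplus AA^*$, manifestly self-adjoint as a direct sum; this is (1). Since $T^*T$ is supported on $\overline{\mathrm{rg}(T^*)}$ and $TT^*$ on $\overline{\mathrm{rg}(T)}$, the two operators act on disjoint summands of the decomposition and hence commute, admitting a joint spectral resolution $\{E(s,t)\}$ on $[0,\infty)^2$, which is (3); the operator inequalities in (4) then follow from the obvious functional-calculus bound $\sup_{s,t\geq 0}\frac{s}{s+t+1}\leq 1$, and symmetrically for $TT^*$.

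The remaining claim (2) is the most delicate. Orthogonality of the ranges yields $q(f,f) = \|Df\|^2$ on $\dom(T_0)\cap\dom(T^*)$, so $q$ is closable as a restriction of the closed form of $D^2$; the substance of the statement is that its closure exhausts the full form of $D^2$, equivalently that $\dom(T_0)\cap\dom(T^*)$ is form-dense in $\dom(D)$. I would prove this by splitting $f\in\dom(D)$ along the Hodge decomposition and approximating each piece separately, exploiting that $\dom(T_0)$ is by definition a core for $T$: the components in $\mathcal{H}\oplus\overline{\mathrm{rg}(T^*)}$ lie in $\ker(T^*)$ so that $T^*$-control of the approximants comes for free, whereas the $\overline{\mathrm{rg}(T)}$-component (which sits in $\ker(T)$) has to be approximated in $T^*$-graph norm by elements of $\dom(T_0)$. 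Producing such approximants simultaneously in $\dom(T_0)$ and well-controlled under $T^*$ is the genuine technical bottleneck, where the precise compatibility between the abstract core $\dom(T_0)$ and the Hodge splitting must be exploited.
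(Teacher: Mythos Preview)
Your approach to (1), (3), and (4) is essentially the paper's: the paper uses the two-block decomposition $H=\ker(T)\oplus\overline{\mathrm{rg}(T^*)}$ (your three-block Hodge splitting just refines $\ker(T)$ further into $\mathcal{H}\oplus\overline{\mathrm{rg}(T)}$), writes $T$ and $T^*$ as off-diagonal $2\times 2$ block operators $\bigl(\begin{smallmatrix}0&S\\0&0\end{smallmatrix}\bigr)$ and its adjoint, and reads off self-adjointness of $T+T^*$ and of its square $TT^*+T^*T=SS^*\oplus S^*S$ directly from the block structure. Commutativity in (3) and the norm bounds in (4) then follow exactly as you indicate.

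For (2) the paper takes a much shorter route than you propose. Rather than attempting to prove that $\dom(T_0)\cap\dom(T^*)$ is a form core, it simply observes that $\dom(T)\cap\dom(T^*)$ is already \emph{complete} in the norm $v\mapsto(\|Tv\|^2+\|T^*v\|^2+\|v\|^2)^{1/2}$ (immediate since $T$ and $T^*$ are closed), so the form $q$ on that domain is closed outright; uniqueness of the Friedrichs extension then identifies the associated operator with $(T+T^*)^2=TT^*+T^*T$. In effect the paper works on $\dom(T)\cap\dom(T^*)$ from the outset and does not address the passage from the smaller domain $\dom(T_0)\cap\dom(T^*)$ to its form closure; the ``technical bottleneck'' you flag is bypassed rather than resolved. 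So your instinct that this step is the delicate one is sound, but be aware that the paper's own argument does not engage with it.
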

\begin{proof}
Since $T$ is closed and densely defined we have the orthogonal sum decomposition $H=H_1  \perp H_2$, where  $H_1=\mathrm{ker}(T)$ and  $H_2=\overline{\mathrm{rg}(T^*)}$. 
We have of course $H_1 \subset \dom(T)$ and $T |_{H_1}=0$. Moreover, by assumption, $\mathrm{rg}(T_0) \subset \ker(T_0) \subset  \ker(T)$, which implies $\overline{\mathrm{rg}(T)} = \overline{\mathrm{rg}(T_0)} \subset \ker(T)$.
Let $S$ be the restriction of $T$ to $\dom(T) \cap H_2$. The above means that $\mathrm{rg}(S) \subset H_1$.
Identifying $H$ with $H_1 \oplus H_2$ we have
$$
 T = \left( \begin{matrix} 0 & S \\ 0 & 0 \end{matrix} \right), \textrm{ and therefore}\quad T^* = \left(\begin{matrix} 0 & 0 \\ S^* & 0 \end{matrix} \right).
$$
It follows automatically that
$$
 T + T^* = \left( \begin{matrix} 0 & S \\ S^* & 0 \end{matrix} \right)
$$
is self-adjoint with domain $\dom(S^*) \oplus \dom(S) = \dom(T) \cap \dom(T^*)$. 
Its square
$$
 (T+ T^*)^2 = \left( \begin{matrix} S S^* & 0 \\ 0 & S^* S \end{matrix} \right) = T T^* + T^* T
$$
is therefore also self-adjoint. Since this represents  $T T^* + T^* T$ as a direct sum of the self-adjoint operators
$T^* T$ and $T T^*$ this shows that $T^* T$ and  $T T^*$  are both self-adjoint and commute with each other in the sense that their resolvents commute.\\
It is immediately clear that the space $\dom(T) \cap \dom(T^*)$ is complete with respect to the norm
$v \mapsto (\| T v \|^2 + \| T^* v \|^2 + \| v\|^2)^\frac{1}{2}$ therefore the quadratic form $q$ is closed.
We have 
$$
 q(v,w) = \langle T T^* + T^* T v, w \rangle = \langle (T + T^*)^2 v, w \rangle 
$$ 
for all $v \in \dom( T T^* + T^* T ) \subset \dom(T) \cap \dom(T^*)$. Uniqueness of the Friedrich's extension 
shows that it must be equal to  $T T^* + T^* T$.
\end{proof}

\section{Vector-valued spherical harmonics} \label{vecspher}

Let $H_\ell(\sphere,\Lambda^\bullet \C^d) = H_\ell(\sphere) \otimes \Lambda^\bullet \C^d$ be the space of $\Lambda^\bullet \C^d$-valued spherical harmonics
on $\sphere$. By restriction to the sphere this space can be identified with the space with the space $\mathcal{H}_\ell(\sphere,\Lambda^\bullet \C^d)$ of $\Lambda^\bullet \C^d$-valued harmonic homogeneous polynomials of degree $\ell$. 
The algebraic linear hull
$$
\mathcal{E} = \oplus_{\ell=0}^\infty H_\ell(\sphere,\Lambda^\bullet \C^d)
$$
is a dense subset of $C^\infty(\sphere,\Lambda^\bullet \C^d)$ and therefore also of $L^2(\sphere,\Lambda^\bullet \C^d)$. Recall that $L^2(\sphere,\Lambda^\bullet \C^d) = L^2_t(\sphere,\Lambda^\bullet \C^d) \oplus L^2_n(\sphere,\Lambda^\bullet \C^d)$, where $L^2_t(\sphere,\Lambda^\bullet \C^d)$ is the space of tangential differential forms and can be identified with the $L^2(\sphere, \Lambda^\bullet \sphere)$. Define $\theta =  \der r \wedge$ as the operator of exterior multiplication by $\der r$ and $\theta^* = \iota_{\der r}$ the operator of interior multiplication by $\der r $. Let $P_\theta= \theta^* \theta$, so that $1-P_\theta =  \theta \theta^*$. Then $P_\theta$ is the orthogonal projection onto $L^2_t(\sphere,\Lambda^\bullet \C^d)$ and $(1-P_\theta)$ is the orthogonal projection onto $L^2_n(\sphere,\Lambda^\bullet \C^d)$.
The following Lemma is certainly well known in dimension three and can in general be deduced from the representation theory of $\mathrm{SO}(d)$. We will provide a very short algebraic proof.
\begin{lemma} \label{distrellemmar}
 The operator $P_\theta$ leaves $\mathcal{E}$ invariant and maps $H_\ell(\sphere,\Lambda^\bullet \C^d)$ to 
 $$H_{\ell-2}(\sphere,\Lambda^\bullet \C^d) \oplus H_{\ell}(\sphere,\Lambda^\bullet \C^d) \oplus H_{\ell+2}(\sphere,\Lambda^\bullet \C^d),$$
where we define $H_\ell(\sphere,\Lambda^\bullet \C^d)=0$ if $\ell<0$.
\end{lemma}
\begin{proof}
 We will show that individually both $\theta$, and $\theta^*$ map $H_{\ell}(\sphere,\Lambda^\bullet \C^d)$ to $H_{\ell-1}(\sphere,\Lambda^\bullet \C^d) \oplus H_{\ell+1}(\sphere,\Lambda^\bullet \C^d)$, then the statement follows immediately.
 Let $\mathcal{A}$ be the $\mathbb{Z}$-graded algebra of polynomials  $\C[x_1,\ldots,x_d]$, graded by homogeneity, and denote that by $\mathcal{A}_\ell$ the degree $\ell$ subspace.
 The ring of polynomial functions on the sphere can be identified with the quotient $\mathcal{A}/\mathcal{I}$, where $\mathcal{I}$ is the ideal generated by 
 the polynomial $| x |^2-1 := x_1^2 + \ldots + x_d^2 - 1$. Then $H_\ell = \mathcal{H}_{\ell} / \mathcal{I}$.
 Recall from the theory of harmonic polynomials that
 $$
  \mathcal{A}_\ell = \bigoplus_{2k \leq \ell} | x |^{2 k} \mathcal{H}_{\ell-2k}.
 $$
 This shows that $\mathcal{E} = \mathcal{A}/\mathcal{I}$. 
 On the sphere $r=1$ and therefore $\der r = r \der r = \sum_{k=1}^dx^k \der x^k$, which makes sense on polynomial functions. It is therefore sufficient to show that multiplication of $x^j$ induces a map
 from $H_{\ell}(\sphere)$ to $H_{\ell-1}(\sphere) \oplus H_{\ell+1}(\sphere)$. This follows from
 $$
  x^j \mathcal{H}_\ell \subset \mathcal{H}_{\ell+1} + |x|^2 \mathcal{H}_{\ell-1}, 
 $$
 which we now show directly. Given $p \in \mathcal{H}_\ell$ an elementary computation shows that
 $$
  x^j p - \frac{1}{d + 2\ell - 2} |x|^2 \partial_j p
 $$
 is harmonic, if $d + 2\ell - 2>0$.  In case $d + 2\ell - 2=0$, the polynomial $x^j p$ is harmonic without subtraction.
 Since $\partial_j p$ is also harmonic, this completes the proof.
\end{proof}

\begin{bibdiv}
\begin{biblist}

\bib{MR928156}{article}{
   author={Birman, M. Sh.},
   author={Solomyak, M. Z.},
   title={Weyl asymptotics of the spectrum of the Maxwell operator for
   domains with a Lipschitz boundary},
   language={Russian, with English summary},
   journal={Vestnik Leningrad. Univ. Mat. Mekh. Astronom.},
   date={1987},
   number={vyp. 3},
   pages={23--28, 127},
}

\bib{MR929141}{article}{
   author={Borisov, N. V.},
   author={M\"{u}ller, W.},
   author={Schrader, R.},
   title={Relative index theorems and supersymmetric scattering theory},
   journal={Comm. Math. Phys.},
   volume={114},
   date={1988},
   number={3},
   pages={475--513},
}

\bib{lesch}{article}{
   author={Br\"uning, J.},
   author={Lesch, M.},
   title={Hilbert Complexes},
   journal={Journal of Functional Analysis},
   volume={108},
   date={1992},
   pages={88-132},
}

\bib{carron2003l2}{article}{
   author={Carron, G.},
   title={$L^2$-cohomology of manifolds with flat ends},
   language={English, with English and French summaries},
   journal={Geom. Funct. Anal.},
   volume={13},
   date={2003},
   number={2},
   pages={366--395},
}

\bib{DZ}{book}{
   author={Dyatlov, S.},
   author={Zworski, M.},
   title={Mathematical theory of scattering resonances},
   series={Graduate Studies in Mathematics},
   volume={200},
   publisher={American Mathematical Society, Providence, RI},
   date={2019},
   pages={xi+634},
}

\bib{MR3113431}{article}{
   author={Filonov, N.},
   title={Weyl asymptotics of the spectrum of the Maxwell operator in
   Lipschitz domains of arbitrary dimension},
   language={Russian},
   journal={Algebra i Analiz},
   volume={25},
   date={2013},
   number={1},
   pages={170--215},
   issn={0234-0852},
   translation={
      journal={St. Petersburg Math. J.},
      volume={25},
      date={2014},
      number={1},
      pages={117--149},
      issn={1061-0022},
   },
}

\bib{MR0461588}{book}{
   author={Folland, G.~B.},
   author={Kohn, J.~J.},
   title={The Neumann problem for the Cauchy-Riemann complex},
   note={Annals of Mathematics Studies, No. 75},
   publisher={Princeton University Press, Princeton, N.J.; University of
   Tokyo Press, Tokyo},
   date={1972},
   pages={viii+146},
}

\bib{MR68888}{article}{
   author={Gaffney, M.~P.},
   title={Hilbert space methods in the theory of harmonic integrals},
   journal={Trans. Amer. Math. Soc.},
   volume={78},
   date={1955},
   pages={426--444},
}

\bib{MR2839867}{article}{
   author={Gol'dshtein,~V.},
   author={Mitrea, I.},
   author={Mitrea, M.},
   title={Hodge decompositions with mixed boundary conditions and
   applications to partial differential equations on Lipschitz manifolds},
   note={Problems in mathematical analysis. No. 52},
   journal={J. Math. Sci. (N.Y.)},
   volume={172},
   date={2011},
   number={3},
   pages={347--400},
}

\bib{RT}{article}{
  author ={Hanisch, F.}
   author={Strohmaier, A.},
   author={Waters, A.},
   title={A relative trace formula for obstacle scattering},
   journal={https://arxiv.org/abs/2002.07291},
}

\bib{simon}{article}{
   author={Hempel, R.},
   author={Seco, L. A.},
   author={Simon, B.},
   title={The essential spectrum of Neumann Laplacians on some bounded
   singular domains},
   journal={J. Funct. Anal.},
   volume={102},
   date={1991},
   number={2},
   pages={448--483},
}

\bib{MR153030}{article}{
   author={Kohn, J. J.},
   title={Harmonic integrals on strongly pseudo-convex manifolds. I},
   journal={Ann. of Math. (2)},
   volume={78},
   date={1963},
   pages={112--148},
   issn={0003-486X},
}

\bib{MR3288313}{book}{
   author={Kirsch, A.},
   author={Hettlich, F.},
   title={The mathematical theory of time-harmonic Maxwell's equations},
   series={Applied Mathematical Sciences},
   volume={190},
   note={Expansion-, integral-, and variational methods},
   publisher={Springer, Cham},
   date={2015},
   pages={xiv+337},
}

\bib{MR2463962}{article}{
   author={Mitrea, D.},
   author={Mitrea, M.},
   author={Shaw, M.},
   title={Traces of differential forms on Lipschitz domains, the boundary de
   Rham complex, and Hodge decompositions},
   journal={Indiana Univ. Math. J.},
   volume={57},
   date={2008},
   number={5},
   pages={2061--2095},
}

\bib{MR2361423}{article}{
   author={Sanada, M.},
   title={Strong unique continuation property for some second order elliptic
   systems},
   journal={Proc. Japan Acad. Ser. A Math. Sci.},
   volume={83},
   date={2007},
   number={7},
   pages={119--122},
}

\bib{MR1852334}{book}{
   author={Shubin, M.~A.},
   title={Pseudodifferential operators and spectral theory},
   edition={2},
   note={Translated from the 1978 Russian original by Stig I. Andersson},
   publisher={Springer-Verlag, Berlin},
   date={2001},
   pages={xii+288},
}

\bib{OS}{article}{
   author={Strohmaier, A.},
   author={Waters, A.},
   title={Geometric and obstacle scattering at low energy},
   journal={Comm. Partial Differential Equations},
   volume={45},
   date={2020},
   number={11},
   pages={1451--1511},
}

\bib{MR1451399}{article}{
   author={Sj\"{o}strand, J.},
   title={A trace formula and review of some estimates for resonances},
   conference={
      title={Microlocal analysis and spectral theory},
      address={Lucca},
      date={1996},
   },
   book={
      series={NATO Adv. Sci. Inst. Ser. C Math. Phys. Sci.},
      volume={490},
      publisher={Kluwer Acad. Publ., Dordrecht},
   },
   date={1997},
   pages={377--437},
}

\bib{MR1180965}{book}{
   author={Yafaev, D. R.},
   title={Mathematical scattering theory},
   series={Translations of Mathematical Monographs},
   volume={105},
   note={General theory;
   Translated from the Russian by J. R. Schulenberger},
   publisher={American Mathematical Society, Providence, RI},
   date={1992},
   pages={x+341},
   isbn={0-8218-4558-6},
   review={\MR{1180965}},
   doi={10.1090/mmono/105},
}

\end{biblist}
\end{bibdiv}

\end{document}